  
\documentclass[12pt,a4paper,reqno]{amsart}  
\usepackage{amsmath, amssymb, amsthm} 
\usepackage[english]{babel} 
\usepackage{fullpage}  
\usepackage{enumerate}
\usepackage[all]{xy}
\usepackage{graphicx}
\usepackage{mathtools}
\usepackage{mathrsfs} 
\usepackage{hyperref}
\usepackage{verbatim}
\numberwithin{equation}{section}
\usepackage{tikz}
\usepackage[pagewise]{lineno}

\address[]{\textit{Xiao Han}
\newline \indent 
Queen Mary University of London,
}
\email{x.h.han@qmul.ac.uk}

\def\ot{\otimes}

\def\stac#1{\raise-.2cm\hbox{$\stackrel{\,\displaystyle\otimes\,}{\scriptscriptstyle{#1}}$}}

\newtheorem{thm}{Theorem}[section]

\theoremstyle{definition}

\newtheorem{rem}[thm]{Remark}

\newcommand{\rosso}[1]{{\color{red}{#1}}}

\newcommand{\id}{\mathrm{id}}

\newcommand{\Aut}{\mathrm{Aut}}

\usepackage{xcolor}
\usepackage{microtype}

\numberwithin{equation}{section}

\theoremstyle{plain}
\newtheorem{theorem}{Theorem}[section]
\newtheorem{proposition}[theorem]{Proposition}
\newtheorem{lemma}[theorem]{Lemma}

\theoremstyle{definition}
\newtheorem{definition}[theorem]{Definition}

\theoremstyle{remark}
\newtheorem{remark}[theorem]{Remark}





\newcommand{\beq}{\begin{equation}}
\newcommand{\eeq}{\end{equation}}




\newcommand{\one}[1]{{#1}{}_{\scriptscriptstyle{(1)}}}
\newcommand{\two}[1]{{#1}{}_{\scriptscriptstyle{(2)}}}
\newcommand{\three}[1]{{#1}{}_{\scriptscriptstyle{(3)}}}
\newcommand{\four}[1]{{#1}{}_{\scriptscriptstyle{(4)}}}
\newcommand{\five}[1]{{#1}{}_{\scriptscriptstyle{(5)}}}
\newcommand{\six}[1]{{#1}{}_{\scriptscriptstyle{(6)}}}
\newcommand{\seven}[1]{{#1}{}_{\scriptscriptstyle{(7)}}}


\newcommand{\nul}[1]{{#1}{}^{\scriptscriptstyle{(0)}}}
\newcommand{\eins}[1]{{#1}{}^{\scriptscriptstyle{(-1)}}}


\newcommand{\uone}[1]{{#1}^{\scriptscriptstyle{(1)}}}
\newcommand{\utwo}[1]{{#1}^{\scriptscriptstyle{(2)}}}
\newcommand{\uthree}[1]{{#1}^{\scriptscriptstyle{(3)}}}


\newcommand{\yi}[1]{{#1}{}^{\tilde{1}}}
\newcommand{\er}[1]{{#1}{}^{\tilde{2}}}
\newcommand{\san}[1]{{#1}{}^{\tilde{3}}}

\renewcommand{\o}{{}_{\scriptscriptstyle{(1)}}}

\renewcommand{\t}{{}_{\scriptscriptstyle{(2)}}}

\renewcommand{\th}{{}_{\scriptscriptstyle{(3)}}}




\newcommand{\Ad}{\mathrm{Ad}}


\begin{document}

\keywords{Hopf algebroid, quantum group, Cayley algebra, coherent 2-group.}
\title{On coherent Hopf 2-Algebras}
\author{Xiao Han}

\begin{abstract}
    We construct a coherent Hopf 2-algebra in terms of Hopf coquasigroups, which relax the coassociativity condition and generalize the results in \cite{XH2023}. We also study quasi coassociative Hopf coquasigroups, and show that they give rise to coherent Hopf 2-algebras with nontrivial coassociators. As an example, we investigate the algebra of functions on a Cayley algebra basis.
\end{abstract}

\maketitle
\begin{quote}  2010 Mathematics Subject Classification: 16T05, 17A35, 20G42\end{quote}
\tableofcontents

\section{Introduction}

 In this paper, we study coherent Hopf 2-algebras (or noncoassociative quantum 2-groups) as a generalization of strict Hopf 2-algebras \cite{XH2023}.  Since Hopf coquasigroups \cite{Majid09} are noncoassociative quantum groups that provide many interesting examples, we are motivated to construct coherent Hopf 2-algebras from Hopf coquasigroups. Moreover, the coassociators of Hopf coquasigroups will play an important role in the coherence conditions of  coherent Hopf 2-algebras.

Recall that for a classical coherent 2-group, all the 1-arrows and 2-arrows are weakly invertible. Moreover, for the set of 2-arrows carries two products: the `horizontal' and the `vertical' products, which form a nonassociative group and a groupoid, respectively. Therefore, by applying the idea of `2-arrow' quantization,  a coherent Hopf 2-algebra could consist of two Hopf coquasigroups, which correspond to the `quantum 1-arrows' and `quantum 2-arrows'. Moreover, for the `quantum 2-arrows', it is on the one hand,  a Hopf coquasigroup corresponds to the `horizontal' coproduct; on the other hand, a Hopf algebroid corresponds to the `vertical' coproduct. These two coproducts also satisfy the interchange law. As a coherent Hopf 2-algebra, the coherence condition is described by a coassociator, which satisfies the `3-cocycle' condition.

We also study crossed comodules of Hopf coquasigroups as a generalization of crossed comodules of Hopf algebras \cite{Yael11}. We show that if a crossed comodule of a Hopf coquasigroup is quasi coassociative, one can construct a coherent Hopf 2-algebra with a nontrivial coassociator. As an example, we study the Hopf coquasigroup consisting of functions on a Cayley algebra basis.  We show that this Hopf coquasigroup is quasi coassociative and its coassociator is controlled by a 3-coboundary.

The paper is organized as follows: In \S 2 and \S 3, we will provide a brief introduction to coherent 2-groups, Hopf coquasigroups and Hopf algebroids. In \S 4, we will define coherent Hopf 2-algebras and study their properties. In \S 5, we will introduce crossed comodules of Hopf coquasigroups and quasi coassociative Hopf coquasigroups, which are shown to be coherent Hopf 2-algebras  under some conditions.  In \S 6, we study a special  coherent Hopf 2-algebra, namely, a Hopf coquasigroup which consists of functions on a Cayley algebra basis.

\section{Coherent 2-groups}\label{sec. coherent 2-group}
In this section, we will introduce quasigroups and coherent 2-groups  \cite{BL} \cite{Wockel09}.

\subsection{Quasigroups}
By \cite{Majid09}, we have the definition of quasigroup: 
\begin{definition}\label{def. quasigroup}
A \textit{quasigroup} is a set $G$ with a product and identity, such that for each element $g$ there is an inverse $g^{-1}\in G$, which satisfies $g^{-1}(gh)=h$ and $(hg^{-1})g=h$ for any $h\in G$.  
\end{definition}
For a quasigroup, the \textit{multiplicative associator} $\beta: G^{3}\to G$ is defined by
\begin{align}
    g(hk)=\beta(g, h, k)(gh)k,
\end{align}
for any $g, h, k\in G$. The group of associative elements $N(G)$ is given by
$$N(G)=\{a\in G | (ag)h=a(gh),\quad g(ah)=(ga)h,\quad (gh)a=g(ha),\quad \forall g, h\in G\},$$
which sometimes called the `nucleus'. A quasigroup is called \textit{quasiassociative}, if $\beta$ has its image in $N(G)$ and $uN(G)u^{-1}\subseteq N(G)$ for any $u\in G$. 
It is clear that any element in $N(G)$ can `pass through' the brackets of a product. For example, $(g(hx))k=(gh)(xk)$  for any $g, h, k\in G$ and $x\in N(G)$. By \cite{Majid09}, we also have the following Lemma:

\begin{lemma}\label{lemma. 3-cocycle}
Let $G$ be a quasiassociative quasigroup, then we have the following 3-cocycle condition:
\begin{align}\label{3cocycle}
   (g\beta(h, k, l)g^{-1})\beta(g, hk, l)\beta(g, h, k)=\beta(g, h, kl) \beta(gh, k, l)
\end{align}
for any $ g, h, k, l\in G$.
\end{lemma}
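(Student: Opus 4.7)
The plan is to evaluate the fourfold product $g(h(kl))$ in two different ways by iteratively applying the defining relation $a(bc) = \beta(a,b,c)(ab)c$, then to equate the two outputs and cancel a common rightmost factor using the right-cancellation law of the quasigroup. Quasiassociativity plays a double role here: it guarantees both that every value of $\beta$ lies in $N(G)$ and that the conjugates $g\beta(h,k,l)g^{-1}$ lie in $N(G)$, so every bracket-rearrangement in what follows is justified by the definition of the nucleus.

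First I would compute one bracketing by applying $\beta$ at the outer and then the inner level,
\begin{align*}
g(h(kl)) \;=\; \beta(g,h,kl)\,(gh)(kl) \;=\; \beta(g,h,kl)\,\beta(gh,k,l)\,((gh)k)l,
\end{align*}
where the second equality uses that $\beta(g,h,kl) \in N(G)$ may be passed freely past the outer multiplication.

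For the second expansion I would start by rewriting the inner factor as $h(kl) = \beta(h,k,l)(hk)l$, and then propagate the resulting $\beta(h,k,l)$ through the outer $g$-multiplication. Setting $x := \beta(h,k,l) \in N(G)$, the nucleus property gives $g\bigl(x\cdot ((hk)l)\bigr) = (gx)\cdot ((hk)l)$. To continue moving this factor to the outside, I would use the identity $g\,\beta(h,k,l) = \bigl(g\,\beta(h,k,l)\,g^{-1}\bigr)\,g$, which follows from the quasigroup axiom $(ug^{-1})g = u$; writing $z := g\,\beta(h,k,l)\,g^{-1}$, which lies in $N(G)$ by quasiassociativity, a second use of the nucleus property yields $(zg)\,((hk)l) = z\cdot \bigl(g((hk)l)\bigr)$. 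Finally, expanding
\begin{align*}
g((hk)l) \;=\; \beta(g,hk,l)\,(g(hk))\,l \;=\; \beta(g,hk,l)\,\beta(g,h,k)\,((gh)k)l
\end{align*}
and absorbing the nucleus-valued prefactors produces
\begin{align*}
g(h(kl)) \;=\; \bigl(g\,\beta(h,k,l)\,g^{-1}\bigr)\,\beta(g,hk,l)\,\beta(g,h,k)\,((gh)k)l.
\end{align*}

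Equating the two resulting expressions for $g(h(kl))$ and applying right-cancellation by $((gh)k)l$, which is valid in any quasigroup since $(uy)y^{-1}=u$, immediately yields \eqref{3cocycle}. The main difficulty is really just careful bookkeeping: in the absence of global associativity, each rebracketing has to be traced back explicitly to one of the three defining properties of $N(G)$, to the stability $gN(G)g^{-1} \subseteq N(G)$, or to a quasigroup inversion axiom, and one should be vigilant never to commute past a factor that has not yet been verified to lie in the nucleus.
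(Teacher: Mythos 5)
Your proposal is correct and is essentially the paper's own proof run in the opposite direction: the paper multiplies both sides of \eqref{3cocycle} by $((gh)k)l$ and contracts each side to $g(h(kl))$ using the nucleus properties, whereas you expand $g(h(kl))$ twice and then cancel the common factor $((gh)k)l$. The intermediate manipulations (passing $\beta$-values through brackets via $N(G)$, and rewriting $g\beta(h,k,l)=(g\beta(h,k,l)g^{-1})g$) are the same in both.
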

Lemma \ref{lemma. 3-cocycle} will be useful in the proof of Theorem \ref{thm. quasigroup constrcuct a coherent 2-group}.

\subsection{Coherent 2-groups}

By \cite{Wockel09}, we know  that a coherent 2-group is a monoidal category, in which every object is weakly invertible and every morphism is invertible. More precisely, we have the following definition: 
\begin{definition}\label{def. coherent 2-group}
A \textit{coherent 2-group} is a monoidal category $(G, \otimes, I, \alpha, r, l)$, with the unit $I$ and three natural isomorphisms, namely,  the associator, with components $\alpha_{g,h,k}: (g\otimes h)\ot k\to g\ot (h\ot k)$,  the right and left unitor,  with components $r_{g}: g\otimes I\to g$ and $l_{g}: I\otimes g\to g$,  such that  the following diagrams commute:
\begin{itemize}
    \item[(1)] The pentagon diagram:  {\small
 \[
  \xymatrix@=10pt{ &&{\scriptstyle (g\otimes h)\otimes (k\otimes l)}
  \ar[drr]^(.5){\displaystyle\alpha_{g,h,k\otimes l}}\\
  {\scriptstyle ((g\otimes h)\otimes k)\otimes
  l}\ar[urr]^(.5){\displaystyle\alpha_{g\otimes
  h,k,l}}\ar[ddr]_{\displaystyle\alpha_{g,h,k}\otimes \id_{l}}&&&&
  {\scriptstyle g\otimes(h\otimes (k\otimes l))}\\
  \\
  &{\scriptstyle (g\otimes (h\otimes k))\otimes
  l}\ar[rr]_{\displaystyle\alpha_{g,h\otimes k,l}}&&{\scriptstyle
  g\otimes((h\otimes k)\otimes l)}\ar[uur]_{\displaystyle\id_{g}\otimes
  \alpha_{h,k,l}} }
 \]
 }
 \item[(2)]  {\small
 \begin{equation*}
  \xymatrix{
  {\scriptstyle (g\otimes I)\otimes h}\ar[dr]_(.4){\displaystyle r_{g}\otimes\id_{h}}\ar[rr]^{\displaystyle \alpha_{g,I,h}}&& {\scriptstyle g\otimes (I \otimes h)}\ar[dl]^(.4){\displaystyle \id_{g}\otimes l_{h}}\\
  &  {\scriptstyle g\otimes h}}
 \end{equation*}
 }
 \item[(3)]
 Moreover, there are an additional functor $\iota: G\to G$, and  two natural equivalences, with components $i_{g}: g\otimes \iota(g)\to I$ and $e_{g}: \iota(g)\otimes g\to I$, such that the following diagram commutes:
 {\small
 \begin{equation*}
  \xymatrix{
  {\scriptstyle(g\otimes \iota(g))\otimes g}\ar[d]_{\displaystyle \alpha_{g,\iota(g),g}}\ar[rr]^(.55){\displaystyle i_{g}\otimes\id_{g}}&&{\scriptstyle I\otimes g}\ar[r]^(.6){\displaystyle l_{g}}&\ar[d]^{\displaystyle \id_{g}}{\scriptstyle g}\\
  {\scriptstyle g\otimes (\iota(g)\otimes g)}\ar[rr]^(.55){\displaystyle \id_{g}\otimes e_{g}}&&{\scriptstyle g\otimes I}\ar[r]^(.6){\displaystyle r_{g}}&{\scriptstyle g}
  }
 \end{equation*}
 }
\end{itemize}
A \textit{strict 2-group} is a coherent 2-group, such that the natural transformations $\alpha$, $l$, $r$, $i$ and $e$ are identities.
\end{definition}

In general, the objects of a coherent 2-group can be any unital set with a binary operation. However, in this paper, we are interested in a more restricted case where the objects of the corresponding monoidal category form a quasigroup, such that $l, r, i, e$ are identity natural transformations. Moreover, for a quasiassociative quasigroup we have

\begin{theorem}\label{thm. quasigroup constrcuct a coherent 2-group}
If $G$ is a quasiassociative quasigroup then $N(G)\ltimes G$ is a coherent 2-group.
\end{theorem}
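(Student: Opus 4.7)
The plan is to realize $G$ as the object set of a coherent 2-group whose morphism structure is controlled by the nucleus $N(G)$. Declare $G^{0} := G$ with tensor product equal to the quasigroup multiplication, and put
$$\Hom(g, h) := \{\,n \in N(G) : h = ng\,\},$$
with composition the $N(G)$-product and identities $\id_{g} = 1$. Since $N(G)$ is a group, this automatically yields a groupoid. Set $\iota(g) := g^{-1}$, and take $l$, $r$, $i$, $e$ to be identity natural transformations; this is consistent because in a quasigroup $1 g = g = g \cdot 1$ and $g g^{-1} = 1 = g^{-1} g = I$, so the sources and targets match up trivially.

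For the tensor product on morphisms, given $n : g \to ng$ and $m : h \to mh$, I would set
$$n \otimes m := n \cdot (g m g^{-1}),$$
which lies in $N(G)$ by the quasiassociativity hypothesis $g N(G) g^{-1} \subseteq N(G)$, and is the unique element of $N(G)$ sending $gh$ to $(ng)(mh)$. Functoriality of $\otimes$ reduces to the identity $(g m_{2} g^{-1})(g m_{1} g^{-1}) = g (m_{2} m_{1}) g^{-1}$, which I would verify by multiplying both sides on the right by $g$ and using quasigroup cancellation (together with the fact that all the nucleus factors rebracket freely). The associator is then $\alpha_{g,h,k} := \beta(g,h,k) \in N(G)$, viewed as a morphism $(gh)k \to g(hk)$.

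The key step is the pentagon axiom. The tensor-with-identity formulas give $\alpha_{g,h,k} \otimes \id_{l} = \beta(g,h,k)$ and $\id_{g} \otimes \alpha_{h,k,l} = g\,\beta(h,k,l)\,g^{-1}$, so the pentagon of Definition \ref{def. coherent 2-group}(1), composed inside $N(G)$, becomes
$$\beta(g, h, kl)\,\beta(gh, k, l) \;=\; \bigl(g\,\beta(h,k,l)\,g^{-1}\bigr)\,\beta(g, hk, l)\,\beta(g, h, k),$$
which is precisely the 3-cocycle condition (\ref{3cocycle}) of Lemma \ref{lemma. 3-cocycle}. The triangle is automatic because $l$ and $r$ are identities; the invertibility coherence of Definition \ref{def. coherent 2-group}(3) reduces to $\alpha_{g, g^{-1}, g} = \id_{g}$, which follows from $g(g^{-1} g) = g = (g g^{-1})g$. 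Conditions (\ref{equ. axiom for associator1}) and (\ref{equ. axiom for associator2}) are immediate, since the quasigroup axioms force $\beta$ to be trivial whenever an argument is $1$ or whenever two adjacent arguments are mutually inverse.

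The main expected obstacle is the naturality of $\alpha$: for $n : g \to g'$, $m : h \to h'$, $p : k \to k'$ one must check that $\alpha_{g',h',k'} \cdot ((n \otimes m) \otimes p) = (n \otimes (m \otimes p)) \cdot \alpha_{g,h,k}$ as elements of $N(G)$. Expanding both sides via the conjugation formula for $\otimes$ produces two iterated products in $G$ into which nucleus elements are inserted at the same positions. Here I would invoke Proposition \ref{prop. associator property}: the two iterated products agree when every nucleus insertion is replaced by $1$ (in which case the equation collapses to a tautology about $\beta$ at the object level), so the proposition promotes the equality to arbitrary nucleus insertions, giving naturality. Combined with the pentagon calculation above, this completes the construction of the coherent 2-group from $G$.
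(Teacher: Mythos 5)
Your proposal is essentially the paper's own construction: your hom-sets $\Hom(g,h)=\{n\in N(G):h=ng\}$ with tensor product $n\otimes m=n(gmg^{-1})$ are precisely the semidirect product $N(G)\ltimes G$ used there, the associator is the same $\beta(g,h,k)$, and the pentagon is discharged by the same appeal to Lemma \ref{lemma. 3-cocycle}. One small caution on your naturality step: the term $\beta(ng,mh,pk)$ is not a fixed iterated product with nucleus elements inserted into prescribed slots, so Proposition \ref{prop. associator property} does not apply verbatim to the whole naturality equation — the paper instead observes that both composites, acting on $(gh)k$, yield $(ng)((mh)(pk))$ and then cancels (Proposition \ref{prop. associator property} being used only for the intermediate rebracketings), which is the one-line repair your sketch needs.
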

\begin{proof}

To have a coherent 2-group, we first need to construct the set of morphisms by $H:=N(G)\ltimes G$, we can see $H$ is a quasigroup with the product given by:
\begin{align*}
    (n, g)\otimes (m, h):=(n(gmg^{-1}), gh).
\end{align*}
The inverse corresponding to this product is given by 
\begin{align*}
    (n, g)^{-1}:=(g^{-1}n^{-1}g, g^{-1}),
\end{align*}
which is well defined since $G$ is quasiassociative. Before we check if $H$ is a quasigroup, we can see that the adjoint action $\Ad: G\to \Aut(N(G))$ given by $\Ad_{g}(m):=gmg^{-1}$ is well defined for any $g\in G$ and $m\in N(G)$, since for any $g\in G$ and $m,n\in N(G)$, we have $(gmg^{-1})(gng^{-1})=((gm)g^{-1})(g(ng^{-1}))=(gm)(g^{-1}(g(ng^{-1})))=(gm)(ng^{-1})=g(mng^{-1})=g(mn)g^{-1}$. For any $g,h\in G$ and $m\in N(G)$, we have
$((gh)m(gh)^{-1})g=(g(hmh^{-1})g^{-1})g$. Indeed, on the one hand 
\begin{align*}
    ((gh)m(gh)^{-1})g=((gh)m(gh)^{-1})((gh)h^{-1})=(((gh)m(gh)^{-1})(gh))h^{-1}=((gh)m)h^{-1},
\end{align*}
on the other hand
\begin{align*}
    (g(hmh^{-1})g^{-1})g=&g(hmh^{-1})=((gh)h^{-1})(hmh^{-1})=(gh)(h^{-1}(hmh^{-1}))\\
    =&(gh)(mh^{-1})=((gh)m)h^{-1}.
\end{align*}
Therefore, by multipling $g^{-1}$ on the right side of $((gh)m(gh)^{-1})g=(g(hmh^{-1})g^{-1})g$,  we have $(gh)m(gh)^{-1}=g(hmh^{-1})g^{-1}$. So
\begin{align*}
    \Ad_{gh}(m)=(gh)m(gh)^{-1}=g(hmh^{-1})g^{-1}=\Ad_{g}\circ \Ad_{h} (m).
\end{align*}
Now we can see $H$ is a quasigroup:
\begin{align*}
    &((n, g)\otimes (m, h))\otimes (m, h)^{-1}=(n(gmg^{-1}), gh)\otimes (h^{-1}m^{-1}h, h^{-1})\\
    =&((n(gmg^{-1}))(\Ad_{gh}(\Ad_{h^{-1}}(m^{-1}))), (gh)h^{-1})=((n(gmg^{-1}))(gm^{-1}g^{-1}), g)\\
    =&(n, g),
\end{align*}
for any $(n, g)$, $(m, h)\in H$. Similarly, we have $(n, g)^{-1}\otimes ((n, g)\otimes (m, h))=(m, h)$.

Then we can construct the set of objects by $G$ and the tensor product $\otimes: G\times G\to G$ is defined to be the multiplication of $G$. The source and target maps $s, t: H\to G$ are given by
\begin{align*}
    s(n, g):=g, \qquad t(n, g):=ng.
\end{align*}
The identity morphism $\id: G\to H$ is given by
\begin{align*}
    \id(g):=(1, g).
\end{align*}
The composition $\circ: H{}_{s}\times_{t}H\to H$ is given by 
\begin{align*}
   (n, mg)\circ(m, g):=(nm, g). 
\end{align*}
The composition inverse is given by 
\begin{align*}
    (n, g)^{*}:=(n^{-1}, ng).
\end{align*}
Thus we get a groupoid with the associative composition, inverse, source and target maps. Moreover, the maps $s$, $t$ and $\id$ preserve the tensor product. Indeed, by using the fact that $G$ is quasiassociative we have
\begin{align*}
    &t((n, g)\otimes (m, h))=t(n(gmg^{-1}), gh)=(n(gmg^{-1}))gh=n((gmg^{-1})(gh))\\
    &=n(((gmg^{-1})g)h)
    =n((gm)h)=n(g(mh))=(ng)(mh)=t(n, g)\otimes t(m, h).
\end{align*}
The interchange law of products can be checked as follows:
On the one hand, we have

\begin{align*}
    ((n_{1}, m_{1}g)&\circ(m_{1},g))\ot((n_{2},m_{2}h)\circ(m_{2},h))=(n_{1}m_{1}, g)\ot(n_{2}m_{2},h)\\
    =&(n_{1}m_{1} \Ad_{g}(n_{2}m_{2}),gh).
\end{align*}
On the other hand, 
\begin{align*}
  ((n_{1}, m_{1}g)&\ot(n_{2},m_{2}h))\circ ((m_{1},g)\ot(m_{2},h))\\
  =&(n_{1}\Ad_{m_{1}g}(n_{2}), (m_{1}g)(m_{2}h))\circ (m_{1}\Ad_{g}(m_{2}), gh)\\
  =&(n_{1}\Ad_{m_{1}g}(n_{2})m_{1}\Ad_{g}(m_{2}),gh)\\
  =&(n_{1}m_{1}\Ad_{g}(n_{2})m_{1}^{-1}m_{1}\Ad_{g}(m_{2}),gh)\\
   =&(n_{1}m_{1}\Ad_{g}(n_{2})\Ad_{g}(m_{2}),gh)\\
   =&(n_{1}m_{1}\Ad_{g}(n_{2}m_{2}),gh)
\end{align*}
where the second step is well defined since
\begin{align*}
   m_{1}\Ad_{g}(m_{2})(gh)=&m_{1}((gm_{2}g^{-1})(gh))=m_{1}(((gm_{2}g^{-1})g)h)=m_{1}((gm_{2})h)=m_{1}(g(m_{2}h))\\
   =&(m_{1}g)(m_{2}h).
\end{align*}

The associator $\alpha$ is given by
\begin{align*}
    \alpha_{g, h, k}:=(\beta(g, h, k), (gh)k),
\end{align*}
this is well defined since the image of $\beta$ belongs to $N(G)$, and the source of $\alpha_{g, h, k}$ is $(gh)k$ and the target of $\alpha_{g, h, k}$ is $g(hk)$. Because $G$ is a quasigroup, we have $\alpha_{g, g^{-1}, h}=\alpha_{h, g^{-1}, g}=(1, h)=\id_{h}$ and $\alpha_{1,g,h}=\alpha_{g,1,h}=\alpha_{g,h,1}=\id_{gh}$ for any $g, h\in G$.
We can also see that $\alpha$ is a natural transformation. Indeed, if 
$(l, g)$, $(m, h)$,$(n, k)\in H$, we can see on the one hand
\begin{align*}
    \alpha_{lg, mh, nk}&\circ (((l, g)\otimes (m, h))\otimes (n, k))\\
    =&(\beta(lg, mh, nk), ((lg)(mh))(nk))\circ ((l(\Ad_{g}(m)))(\Ad_{gh}(n)), (gh)k)\\
    =&(\beta(lg, mh, nk)\Big((l(\Ad_{g}(m)))(\Ad_{gh}(n))\Big), (gh)k),
\end{align*}
and on the other hand 
\begin{align*}
    ((l, g)\otimes& ((m, h)\otimes (n, k)))\circ \alpha_{g, h, k}\\
    =&(l \Ad_{g}\big(m \Ad_{h}(n)\big), g(hk))\circ (\beta(g, h, k), (gh)k)\\
    =&(\Big(l \Ad_{g}\big(m \Ad_{h}(n)\big)\Big)\beta(g, h, k), (gh)k).
\end{align*}
Since 
\begin{align*}
    \beta(lg, mh, nk)\Big((l(\Ad_{g}(m)))(\Ad_{gh}(n))\Big) (gh)k=& \beta(lg, mh, nk)(l(\Ad_{g}(m)))\Big((\Ad_{gh}(n)) (gh)k\Big)\\
    =&\beta(lg, mh, nk)(l(\Ad_{g}(m)))\Big((gh)(nk)\Big)\\
    =&\beta(lg, mh, nk)l\Big((\Ad_{g}(m))(gh)\Big)(nk)\\
    =&\beta(lg, mh, nk)l\Big(g(mh)\Big)(nk)\\
    =&\beta(lg, mh, nk)\Big((lg)(mh)\Big)(nk)\\
    =&(lg)((mh)(nk))\\
    =&\Big(l \Ad_{g}\big(m \Ad_{h}(n)\big)\Big)\beta(g, h, k) (gh)k,
\end{align*}
we get
\begin{align*}
    \beta(lg, mh, nk)\Big((l(\Ad_{g}(m)))(\Ad_{gh}(n))\Big)=\Big(l \Ad_{g}\big(m \Ad_{h}(n)\big)\Big)\beta(g, h, k),
\end{align*}
therefore,
\begin{align*}
    \alpha_{lg, mh, nk}\circ (((l, g)\otimes (m, h))\otimes (n, k))=((l, g)\otimes ((m, h)\otimes (n, k)))\circ \alpha_{g, h, k}.
\end{align*}
The pentagon diagram can be proved by Lemma \ref{lemma. 3-cocycle}. Indeed, let $g, h, k, l\in G$,  on the one hand we have
\begin{align*}
  &\alpha_{g, h, kl}  \circ \alpha_{g h, k, l}
  =(\beta(g, h, k l), (gh)(kl))\circ (\beta(gh, k, l), ((gh)k)l)\\
  =&(\beta(g, h, k l)\beta(gh, k, l), ((gh)k)l)
\end{align*}
On the other hand,
\begin{align*}
    &(\id_{g}\otimes \alpha_{h, k, l})\circ \alpha_{g, hk, l}\circ (\alpha_{g, h, k}\otimes id_{l})\\
    =&(g\beta(h, k, l)g^{-1}, g((hk)l))\circ \big(\beta(g, hk, l), (g(hk))l\big)\circ (\beta(g, h, k), ((gh)k)l)\\
    =&((g\beta(h, k, l)g^{-1})\beta(g, hk, l)\beta(g, h, k), ((gh)k)l),
\end{align*}
by using Lemma \ref{lemma. 3-cocycle} we get the pentagon.
For 
the natural transformations $l, r, e, i$, we can see $l_{g}=r_{g}=(1,g)$, and $i_{g}=e_{g}=(1,1)$ for any $g\in G$, which satisfy all the axioms of a coherent 2-group. 
\end{proof}

\section{Hopf coquasigroups and Hopf algebroids}

In this section, we will recall some material about Hopf algebras \cite{Majid}, Hopf coquasigroups and their modules and comodules.
We will also review Hopf algebroids over commutative rings.

\begin{definition}\label{def. Hopf algebra}
A \textit{bialgebra} is an algebra $H$ with two algebra maps $\Delta_{H}: H\to H\ot H$ (called the coproduct) and $\epsilon_{H}: H\to \mathbb{C}$ (called the counit), such that
\begin{align}\label{bialgebra}
    (\id_{H}\ot \Delta_{H})\circ \Delta_{H}=(\Delta_{H}\ot \id_{H})\circ \Delta,\quad 
(\id_{H}\otimes \epsilon_{H}) \circ \Delta_{H} =\id_{H}=(\epsilon_{H}\otimes \id_{H}) \circ \Delta_{H},    
\end{align}
where the algebra multiplication on $H\ot H$ is given by $(h\ot g)\cdot (h'\ot g'):=(hh'\ot gg')$ for any $h\ot g, h'\ot g'\in H\ot H$. Moreover, if there is a linear map $S: H\to H\ot H$ (called the antipode), such that
\begin{align}\label{antipode}
  m_{H}\circ (S\ot \id_{H})\circ \Delta_{H}=1_{H}\epsilon_{H}=  m_{H}\circ (\id_{H}\ot S)\circ \Delta_{H},
\end{align}
where $m_{H}$ is the product on $H$, then $H$ is called a \textit{Hopf algebra}.

\end{definition}

For the coproduct of a bialgebra, we use the sumless Sweedler notation
$\Delta_{H}(h)=\one{h}\ot\two{h}$,
and
its iterations: $\Delta^n=(\id_{H} \ot  \Delta_H) \circ\Delta_H^{n-1}: h \mapsto \one{h}\ot \two{h} \ot
\cdots \ot h_{\scriptscriptstyle{(n+1)\;}}$. In order to define a quantization of a quasigroup, we will relax the condition that the coproduct is coassociative. In other words, the coproduct of a bialgebra will no longer satisfy the first equation of (\ref{bialgebra}). By \cite{Majid09}, we have a quantization of a quasigroup:
\begin{definition}
A {\em Hopf coquasigroup} $H$ is an unital associative algebra, equiped with counital algebra homomorphisms $\Delta: H\to H\otimes H$, $\epsilon: H\to \mathbb{C}$ (called the coproduct and counit), and a linear map $S_{H}: H\to H$ (called the antipode) such that
\begin{equation}
    (m_{H}\otimes \id_{H})(S_{H}\otimes \id_{H}\otimes \id_{H})(\id_{H}\otimes \Delta)\Delta=1\otimes \id_{H}=(m_{H}\otimes \id_{H})(\id_{H}\otimes S_{H}\otimes \id_{H})(\id_{H}\otimes \Delta)\Delta,
\end{equation}
\begin{equation}
    (\id_{H}\otimes m_{H})(\id_{H}\otimes S_{H}\otimes \id_{H})(\Delta\otimes \id_{H})\Delta=\id_{H}\otimes 1= (\id_{H}\otimes m_{H})(\id_{H}\otimes \id_{H}\otimes S_{H})(\Delta\otimes \id_{H})\Delta.
\end{equation}
A morphism between two Hopf coquasigroups is an algebra map $f: H\to G$, such that for any $h\in H$, $\one{f(h)}\ot \two{f(h)}=f(\one{h})\ot f(\two{h})$ and $\epsilon_{G}(f(h))=\epsilon_{H}(h)$.
\end{definition}

\begin{remark}
Since  the coproduct of a Hopf coquasigroup is not necessarily coassociative, we cannot use the  Sweedler index notion for the iterated coproduct $\Delta^{n}$ (but we still use $\one{h}\otimes \two{h}$ as the image of the coproduct $\Delta$). For example, since $(\Delta\ot\id)\circ\Delta\neq (\id\ot \Delta)\circ \Delta$ we won't have: $\one{\one{h}}\otimes \two{\one{h}}\otimes \two{h}= \one{h}\otimes\two{h}\otimes \three{h}=\one{h}\otimes \one{\two{h}}\otimes \two{\two{h}}$.  It is given in \cite{Majid09} that the antipode $S_{H}$ of a Hopf coquasigroup also satisfies: 
\begin{itemize}
    \item $\one{h}S_{H}(\two{h})=\epsilon(h)=S_{H}(\one{h})\two{h}$,
    \item $S_{H}(hh')=S_{H}(h')S_{H}(h)$,
    \item $\one{S_{H}(h)}\otimes \two{S_{H}(h)}=S_{H}(\two{h})\otimes S_{H}(\one{h})$,
\end{itemize}
for any $h, h'\in H$.
\end{remark}

Given a Hopf coquasigroup $H$,
we can define a linear map $\beta: H\to H\otimes H\otimes H$ (called the comultiplicative coassociator) by 
\begin{align}\label{coassociator}
    \beta(h):=\one{\one{h}}\one{\one{S_{H}(\two{h})}}\otimes \one{\two{\one{h}}}\two{\one{S_{H}(\two{h})}}\otimes \two{\two{\one{h}}}\two{S_{H}(\two{h})}
\end{align}
 for any $h\in H$.
We can see that 
\begin{align}\label{equ. relation of coassociator}
\beta \ast ((\Delta\otimes \id_{H})\circ \Delta)=(\id_{H}\otimes \Delta)\circ \Delta, 
\end{align}
where $\ast$ is the convolution product in the vector space
$H':=\mathrm{Hom}(H,H\ot H\ot H)$, namely $(f \ast g) (h):=f(\one{h})g(\two{h})$ for any $f,g\in H'$. More precisely, (\ref{equ. relation of coassociator}) can be written as 
\begin{align*}
    \one{\one{\one{h}}}&\one{\one{S_{H}(\two{\one{h}})}}\one{\one{\two{h}}}\otimes \one{\two{\one{\one{h}}}}\two{\one{S_{H}(\two{\one{h}})}}\two{\one{\two{h}}}\otimes \two{\two{\one{\one{h}}}}\two{S_{H}(\two{\one{h}})}\two{\two{h}}\\
    =&\one{h}\otimes \one{\two{h}}\otimes \two{\two{h}}.
\end{align*}
 For any $h\in H$, we will always denote the image of $\beta$ by (with sumless index) $$\beta(h)=h^{\hat{1}}\otimes h^{\hat{2}}\otimes h^{\hat{3}}.$$  A Hopf coquasigroup $H$ is a Hopf algebra if and only if $\beta(h)=\epsilon(h)1_{H}\otimes 1_{H}\otimes 1_{H}$.

Given a Hopf coquasigroup $H$, a \textit{left $H$-comodule} is a vector space $V$ carrying a
left $H$-coaction, namely a linear map $\delta^{V} : V\to H\otimes V$
such that
\begin{align}
(\id_{H}\ot  \delta^{V})\circ \delta^{V} & = (\Delta\ot  \id_{V})\circ \delta^{V} ~,  \quad
(\epsilon\ot  \id_{V}) \circ \delta^{V} =\id_{V} ~. \label{eqn:lHcomodule}
\end{align}
In the sumless Sweedler notation, $\delta^{V}: v\mapsto \eins{v}\ot  \nul{v}$, and the left $H$-comodule properties
read
\begin{align*}
\one{\eins{v}}\otimes \two{\eins{v}}\otimes \nul{v} = \eins{v}\otimes \eins{\nul{v}}\otimes \nul{\nul{v}} ~,\quad 
\epsilon (\eins{v}) \,\nul{v}= v ~,
\end{align*}
for all $v\in V$.

In particular, a \textit{left $H$-comodule algebra} is an algebra  $A$,  such that the coaction $\delta: A\to H\ot  A$ is an algebra map. A \textit{left $H$-comodule coalgebra} is a coalgebra $C$, which is a left $H$-comodule and such that the
coproduct and the counit of $C$ are morphisms of $H$-comodules. Explicitly, this means that, for each $c \in C$,
\begin{align*}
\eins{c}\ot \one{\nul{c}}\ot \two{\nul{c}}&=\eins{\one{c}}\eins{\two{c}}\ot \nul{\one{c}}\ot \nul{\two{c}} \, ,
\\
\epsilon_{C}(c)&=\eins{c}\epsilon_{C}(\nul{c}) \, .
\end{align*}

\begin{definition}\label{def. coassocaitive pair}
A \textit{coassociative pair} $(A, B, \phi)$ consists of a Hopf coquasigroup $B$ and a Hopf algebra $A$, together with a Hopf coquasigroup morphism $\phi: B\to A$, such that
\begin{align}\label{definition. coassocaitive pair}
  \begin{cases}
  \phi(\one{\one{b}})\otimes \two{\one{b}}\otimes \two{b}=&\phi({\one{b}})\otimes \one{\two{b}}\otimes \two{\two{b}}\\
    \one{\one{b}}\otimes \phi(\two{\one{b}})\otimes \two{b}=&{\one{b}}\otimes \phi(\one{\two{b}})\otimes \two{\two{b}}\\
      \one{\one{b}}\otimes \two{\one{b}}\otimes \phi(\two{b})=&\one{b}\otimes \one{\two{b}}\otimes \phi(\two{\two{b}}).
  \end{cases}
\end{align}
\end{definition}

\begin{remark}
A coassociative pair can be viewed as the dual case of a group and a quasigroup, such that there is a quasigroup morphism which maps the group into the associative elements of the quasigroup. More precisely, let $H$ be a group, $G$ be a quasigroup, and $\phi: H\to G$ be a morphism of quasigroup, such that the $\phi(H)\subseteq N(G)$. Then we have \begin{align*}
      \begin{cases}
(\phi(h)g)g'=\phi(h)(gg')\\
    (g\phi(h))g'=g(\phi(h)g')\\
g(g'\phi(h))=(gg')\phi(h),
  \end{cases}
\end{align*} 
for any $h\in H$ and $g,g'\in G$. 
\end{remark}

For a Hopf coquasigroup, the $n$-th iterated coproducts $\Delta^{n}_{I}$ (for $n\geq 2$)  are not always equal, where we use index I to distinguish different kinds of iterated product. We have:

\begin{lemma}\label{lemma. coquasigroup}
Let $(A, B, \phi)$ be a coassociative pair of a Hopf algebra $A$ and a Hopf coquasigroup $B$, and let $\Delta_{I}^{n}$, $\Delta_{J}^{n}$ be n-th iterated coproducts of $B$ with $\Delta_{I}^{n}(b)=b_{I_{1}}\otimes b_{I_{2}}\otimes\cdots\otimes b_{I_{n+1}}$ and $\Delta_{J}^{n}(b)=b_{J_{1}}\otimes b_{J_{2}}\otimes\cdots\otimes b_{J_{n+1}}$. If 
\begin{align*}
    b_{I_{1}}\otimes& b_{I_{2}}\otimes\cdots\otimes\epsilon_{B}(b_{I_{m_{1}}})\otimes\cdots\otimes\epsilon_{B}(b_{I_{m_{k}}})\otimes\cdots\otimes b_{I_{n+1}}\\
    =&b_{J_{1}}\otimes b_{J_{2}}\otimes\cdots\otimes\epsilon_{B}(b_{J_{m_{1}}})\otimes\cdots\otimes\epsilon_{B}(b_{J_{m_{k}}})\otimes\cdots\otimes b_{J_{n+1}}
\end{align*}
for $1 \leq m_{1} < m_{2}< \cdots<m_{k}\leq n+1$ (and $m_{i+1}-m_{i}\leq 1$ for any $1\leq i\leq k-1$), then
\begin{align*}
    b_{I_{1}}\otimes &b_{I_{2}}\otimes\cdots\otimes\phi(b_{I_{m_{1}}})\otimes\cdots\otimes\phi(b_{I_{m_{k}}})\otimes\cdots\otimes b_{I_{n+1}}\\
    =&b_{J_{1}}\otimes b_{J_{2}}\otimes\cdots\otimes\phi(b_{J_{m_{1}}})\otimes\cdots\otimes\phi(b_{J_{m_{k}}})\otimes\cdots\otimes b_{J_{n+1}}.
\end{align*}
\end{lemma}

\begin{proof}
First, we show that if 
\begin{align}\label{equ. proposition equ}
    b_{I_{1}}\otimes b_{I_{2}}\otimes\cdots\otimes\epsilon_{B}(b_{I_{m}})\otimes\cdots\otimes b_{I_{n+1}}=b_{J_{1}}\otimes b_{J_{2}}\otimes\cdots\otimes\epsilon_{B}(b_{J_{m}})\otimes\cdots\otimes b_{J_{n+1}}
\end{align}
for $1 \leq m \leq n+1$, then 
\begin{align*}
    b_{I_{1}}\otimes b_{I_{2}}\otimes\cdots\otimes\phi(b_{I_{m}})\otimes\cdots\otimes b_{I_{n+1}}=b_{J_{1}}\otimes b_{J_{2}}\otimes\cdots\otimes\phi(b_{J_{m}})\otimes\cdots\otimes b_{J_{n+1}}.
\end{align*}

We can prove this inductively. For $n=2$, this is obvious by the definition of a coassociative pair. 
Now we consider the case for $n\geq 3$.  We can see both sides of equation (\ref{equ. proposition equ}) are equal to the image of an $(n-1)$-th iterated coproduct $\Delta_{K}^{n-1}$, which can be written as $\Delta_{K}^{n-1}=(\Delta_{K'}^{p}\otimes \Delta_{K''}^{q})\circ \Delta$ for some iterated coproducts $\Delta_{K'}^{p}$, $\Delta_{K''}^{q}$ with $p+q=n-2$. Assume this proposition is correct for $n=N-1$. We have two cases for the index of $I_{m}$ and $J_{m}$:

The first case is that the first index of $b_{I_{m}}$ and $b_{J_{m}}$ are the same (where the first index means the first Sweedler index on the left, for example, the first index of $\one{\two{\two{b}}}$ is 2). When  the first indices of $b_{I_{m}}$ and $b_{J_{m}}$ are 1. In this case, we can see $\Delta_{I}^{n}=(\Delta_{I_{1}}^{p+1}\otimes \Delta_{K''}^{q})\circ \Delta$ and $\Delta_{J}^{n}=(\Delta_{J_{1}}^{p+1}\otimes \Delta_{K''}^{q})\circ \Delta$, for some $(p+1)$-th iterated coproducts $\Delta_{I_{1}}^{p+1}$ and $\Delta_{J_{1}}^{p+1}$. By applying the hypotheses for $\Delta_{I_{1}}^{p+1}$ and $\Delta_{J_{1}}^{p+1}$, we get the result. When  the first indices of $b_{I_{m}}$ and $b_{J_{m}}$ are 2, the situation is similar.

The second case is that the first indices of $b_{I_{m}}$ and $b_{J_{m}}$ are different. Assume the first index of $b_{I_{m}}$ is 1 and $b_{J_{m}}$ is 2. In this case $m$ has to be equal to $p+2$, and $\Delta_{I}^{n}=(\Delta_{E}^{p+1}\otimes \Delta_{K''}^{q})\circ \Delta$ and $\Delta_{J}^{n}=(\Delta_{K'}^{p}\otimes \Delta_{F}^{q+1})\circ \Delta$ for some iterated $(p+1)$-th coproduct
$\Delta_{E}^{p+1}$ with $(id_{B}^{\otimes p}\otimes \epsilon_{B})\circ \Delta_{E}^{p+1}=\Delta_{K'}^{p}$ and iterated $(q+1)$-th coproduct $\Delta_{F}^{q+1}$ with $(\epsilon_{B}\otimes id_{B}^{\otimes q})\circ \Delta_{F}^{q+1}=\Delta_{K''}^{q}$.
Define $\Delta_{G}^{p+1}:=(\Delta_{K'}^{p}\otimes \id_{B})\circ \Delta$ and $\Delta_{H}^{q+1}:=(\id_{B}\otimes\Delta_{K''}^{q})\circ \Delta$ (notice that $\Delta_{E}^{p+1}$ is not necessarily equal to $\Delta_{G}^{p+1}$, and $\Delta_{F}^{q+1}$ is not necessarily equal to $\Delta_{H}^{q+1}$), then we can see 
\begin{align*}
     b_{I_{1}}\otimes &b_{I_{2}}\otimes\cdots\otimes\phi(b_{I_{m}})\otimes b_{I_{m+1}}\otimes \cdots\otimes b_{I_{n+1}}\\
     =&b_{\scriptscriptstyle{(1)}E_{1}}\otimes b_{\scriptscriptstyle{(1)}E_{2}}\otimes\cdots\otimes\phi(b_{\scriptscriptstyle{(1)}E_{p+2}})\otimes b_{\scriptscriptstyle{(2)}K''_{1}}\otimes\cdots\otimes b_{\scriptscriptstyle{(2)}K''_{q+1}}\\
     =&b_{\scriptscriptstyle{(1)}G_{1}}\otimes b_{\scriptscriptstyle{(1)}G_{2}}\otimes\cdots\otimes\phi(b_{\scriptscriptstyle{(1)}G_{p+2}})\otimes b_{\scriptscriptstyle{(2)}K''_{1}}\otimes\cdots\otimes b_{\scriptscriptstyle{(2)}K''_{q+1}}\\
     =&b_{\scriptscriptstyle{(1)}\scriptscriptstyle{(1)}K'_{1}}\otimes b_{\scriptscriptstyle{(1)}\scriptscriptstyle{(1)}K'_{2}}\otimes\cdots\otimes b_{\scriptscriptstyle{(1)}\scriptscriptstyle{(1)}K'_{p+1}}\otimes\phi(\two{\one{b}})\otimes b_{\scriptscriptstyle{(2)}K''_{1}}\otimes\cdots\otimes b_{\scriptscriptstyle{(2)}K''_{q+1}}\\
     =&b_{\scriptscriptstyle{(1)}K'_{1}}\otimes b_{\scriptscriptstyle{(1)}K'_{2}}\otimes\cdots\otimes b_{\scriptscriptstyle{(1)}K'_{p+1}}\otimes\phi(\one{\two{b}})\otimes b_{\scriptscriptstyle{(2)}\scriptscriptstyle{(2)}K''_{1}}\otimes\cdots\otimes b_{\scriptscriptstyle{(2)}\scriptscriptstyle{(2)}K''_{q+1}}\\
     =&b_{\scriptscriptstyle{(1)}K'_{1}}\otimes b_{\scriptscriptstyle{(1)}K'_{2}}\otimes\cdots\otimes b_{\scriptscriptstyle{(1)}K'_{p+1}}\otimes\phi(b_{\scriptscriptstyle{(2)}H_{1}})\otimes b_{\scriptscriptstyle{(2)}H_{2}}\otimes\cdots\otimes b_{\scriptscriptstyle{(2)}H_{q+2}}\\
     =&b_{\scriptscriptstyle{(1)}K'_{1}}\otimes b_{\scriptscriptstyle{(1)}K'_{2}}\otimes\cdots\otimes b_{\scriptscriptstyle{(1)}K'_{p+1}}\otimes\phi(b_{\scriptscriptstyle{(2)}F_{1}})\otimes b_{\scriptscriptstyle{(2)}F_{2}}\otimes\cdots\otimes b_{\scriptscriptstyle{(2)}F_{q+2}}\\
     =&b_{J_{1}}\otimes b_{J_{2}}\otimes\cdots\otimes\phi(b_{J_{m}})\otimes b_{J_{m+1}}\otimes\cdots\otimes b_{J_{n+1}},
\end{align*}
where $b_{\scriptscriptstyle{(1)}E_{1}}\otimes b_{\scriptscriptstyle{(1)}E_{2}}\otimes\cdots\otimes b_{\scriptscriptstyle{(1)}E_{p+2}}\otimes b_{\scriptscriptstyle{(2)}K''_{1}}\otimes\cdots\otimes b_{\scriptscriptstyle{(2)}K''_{q+1}}:=\Delta_{E}^{p+1}(\one{b})\ot\Delta_{K''}^{q}(\two{b})$ and similar for the rest. The 2nd and 6th steps use the hypotheses for $n\leq N-1$, and the 4th step uses the definition of a coassociate pair.

If $k=2$, we can also prove this inductively. Now we need to consider the first index of $b_{I_{m_{1}}}$, $b_{I_{m_{2}}}$, $b_{J_{m_{1}}}$ and $b_{J_{m_{2}}}$. There are several cases. If all of them are equal to 1 or 2, we are done by hypothesis. If the first index of $b_{I_{m_{1}}}$ and $b_{J_{m_{1}}}$  are 1,  and first index of $b_{I_{m_{2}}}$ and $b_{J_{m_{2}}}$ is 2, then we go back to the case for $k=1$ by considering only the terms with the first index equal to 1 or 2. If the first index of $b_{I_{m_{1}}}$, $b_{J_{m_{1}}}$  and $b_{I_{m_{2}}}$ are 1 and the first index of $b_{J_{m_{2}}}$ is 2, then $b_{I_{m_{2}}}$ is the last term whose first index are 1 and $b_{J_{m_{2}}}$ is the first term whose first index are 2. Therefore, we can use the same method above for the $k=1$ case, where the first index of $b_{I_{m}}$ is 1 and $b_{J_{m}}$ is 2. If the first index of $b_{I_{m_{1}}}$, $b_{I_{m_{2}}}$ are 1 and the first index of $b_{J_{m_{1}}}$, $b_{J_{m_{2}}}$ are 2, then $b_{I_{m_{1}}}$, $b_{I_{m_{2}}}$ are the last two terms whose first index are 1 and $b_{J_{m_{1}}}$, $b_{J_{m_{2}}}$ are the first two terms whose first index are 2, we can also use the same method as above, the only different is to move two terms from the `left-hand' side to the `right-hand' side, we will omit the detail here.

\end{proof}

There is a dual version of the Hopf coquasigroup \cite{Majid09}:
\begin{definition}
A  {\em Hopf quasigroup} $A$ is a coassociative coalgebra with a coproduct $\Delta: A\to A\otimes A$  and counit $\epsilon: A\to k$, together with a unital and possibly nonassociative algebra structure, such that the coproduct and counit are algebra maps. Moreover, there is a linear map (the antipode) $S_{A}: A\to A$ such that:
\begin{equation}
    m(\id_{A}\otimes m)(S_{A}\otimes \id_{A}\otimes \id_{A})(\Delta\otimes \id_{A})=\epsilon\otimes \id_{A}=m(\id_{A}\otimes m)(\id_{A}\otimes S_{A}\otimes \id_{A})(\Delta\otimes \id_{A})
\end{equation}
\begin{equation}
    m(m\otimes \id_{A})(\id_{A}\otimes S_{A}\otimes \id_{A})(\id_{A}\otimes \Delta)=\id_{A}\otimes \epsilon=m(m\otimes \id_{A})(\id_{A}\otimes \id_{A}\otimes S_{A})(\id_{A}\otimes\Delta).
\end{equation}
\end{definition}
A Hopf quasigroup is a Hopf algebra if and only if it is associative. In the following, we will introduce central Hopf algebroids, which is a Hopf algebroid \cite{Boehm} with images of the source and target maps belonging to the center.

\begin{definition}\label{def. central Hopf algebroids}
Let $B$ be a commutative algebra, a \textit{central Hopf algebroid over $B$} is an algebra $H$ with two algebra maps (called source and target map) $s:B\to H$ and $t:B\to H$. In addition, there is an antialgebra map (called Hopf algebroid antipode) $S: H\to H$, such that:
\begin{itemize}
    \item [(1)]The image of $s$ and $t$ belongs to the center of $H$.
    \item[(2)] $(H, \Delta, \epsilon)$ is a $B$-coring with the $B$-bimodule structure given by $b\triangleright h\triangleleft b'=s(b)ht(b')$, for any $h\in H$ and $b,b'\in B$. Namely, there are two $B$-bimodule maps $\Delta:H\to H\ot_{B} H$ and $\epsilon:H\to B$, such that
    \[(\Delta\ot_{B} \id)\circ\Delta=(\id\ot_{B}\Delta)\circ\Delta,\quad (\epsilon\ot_{B} \id)\circ\Delta=\id=(\id\ot_{B} \epsilon)\circ\Delta,\]
    where the balanced tensor product $\ot_{B}$ is induced by the $B$-bimodule structure of $H$, i.e. $g\ot_{B}s(b)h=t(b) g \ot_{B}h$ for any $g,h\in H$ and $b\in B$.
    \item[(3)] $\Delta$ and $\epsilon$ are algebra maps. 
    \item [(4)] For any $h\in H$ and $b, b'\in B$, 
    \begin{align}\label{equ. antipode of Hopf algebroid}        S(t(b)hs(b'))=t(b')S(h)s(b).
    \end{align}
    \item[(5)] $m\circ (S\otimes_{B} \id_{H})\circ \Delta=t\circ \epsilon$ and $m\circ (\id_{H}\otimes_{B} S)\circ \Delta=s\circ \epsilon$,
\end{itemize}
 In (5), we can see the product $m$ factors through the balanced tensor product $\ot_{B}$ because of (\ref{equ. antipode of Hopf algebroid}).
\end{definition}

Let $H$ be a central Hopf algebroid over $B$. We define the convolution product $\star: {}_{B}\mathrm{Hom}_{B}(H, B)\times {}_{B}\mathrm{Hom}_{B}(H, B)\to {}_{B}\mathrm{Hom}_{B}(H, B)$ by $(f\star g)(h):=f(\uone{h})g(\utwo{h})$ for any $f, g\in {}_{B}\mathrm{Hom}_{B}(H, B)$ and $h\in H$, where ${}_{B}\mathrm{Hom}_{B}(H, B)$ is the vector space of $B$-bimodule maps and we use 
the upper Sweeder index notation for the coproduct Hopf algebroids, i.e. $\Delta(h)=\uone{h}\ot \utwo{h}$. By  \cite{BW},  we know ${}_{B}\mathrm{Hom}_{B}(H, B)$ is an algebra with unit $\epsilon: H\to B$.

\section{Coherent Hopf-2-algebras}\label{sec. coherent Hopf 2-algebra}

In the following, we generalize the definition of Hopf 2-algebras in \cite{XH2023} by relaxing the coassociativity condition.

\begin{definition}\label{def. coherent Hopf-2-algebra}
A {\em coherent Hopf 2-algebra} consists of a commutative Hopf coquasigroup\\
$(B, m_{B}, 1_{B}, \Delta_{B}, \epsilon_{B}, S_{B})$ and a Hopf coquasigroup $(H, m, 1_{H}, \blacktriangle, \epsilon_{H}, S_{H})$, and a central Hopf algebroid $(H, m, 1_{H}, \Delta, \epsilon, S)$ over $B$. Moreover, there is an algebra map (called coassociator) $\alpha: H\to B\otimes B\otimes B$,  such that all the structures above satisfy the following axioms:
\begin{itemize}
    \item [(i)] The underlying algebra of the Hopf coquasigroup $(H, m, 1_{H}, \blacktriangle, \epsilon_{H}, S_{H})$ and the Hopf algebroid $(H, m, 1_{H}, \Delta, \epsilon, S)$ are the same.
    \item[(ii)] $\epsilon: H\to B$ and $s, t: B\to H$ are morphisms of Hopf coquasigroups.
    \item[(iii)] The two coproducts $\Delta$ and $\blacktriangle$ satisfies the following cocommutation relation:
   \begin{align}\label{equ. cocommutative relation}
        (\blacktriangle\otimes \blacktriangle)\circ \Delta=(\id_{H}\otimes \mathrm{flip}\otimes \id_{H})\circ (\Delta\otimes_{B}\Delta)\circ \blacktriangle,
    \end{align}
    where $\id_{H}\ot \mathrm{flip} \ot \id_{H}:H\ot H\ot_{B\ot B}H\ot H\to H\ot_{B} H\ot H\ot_{B} H$ is given by  $\id_{H}\ot \mathrm{flip} \ot \id_{H}: X\ot  Y\ot_{B\ot B} Z\ot  W\mapsto (X\ot_{B}  Z)\ot (Y\ot_{B}W)$.
\end{itemize}

\begin{itemize}
    \item [(iv)] For the coassociator, 
    \begin{align}
         \alpha\circ t=(\Delta_{B}\otimes \id_{B})\circ \Delta_{B}, \qquad \alpha\circ s=(\id_{B}\otimes \Delta_{B})\circ \Delta_{B}.
    \end{align}

    \item[(v)]  Let $\star$ denote the convolution product corresponding to the coproduct of the Hopf algebroid, we have
    \begin{align}\label{equ. co-natural transformation}
        ((s\otimes s\otimes s)\circ \alpha)\star ((\blacktriangle\otimes \id_{H})\circ \blacktriangle)=((\id_{H}\otimes \blacktriangle)\circ \blacktriangle)\star ((t\otimes t\otimes t)\circ \alpha)
    \end{align}
    \item[(vi)]
     The 3-cocycle condition:
            \begin{equation}\label{equ. coassociator 3}
                ((\epsilon\otimes \alpha)\circ \blacktriangle)\star ((\id_{B}\otimes \Delta_{B}\otimes \id_{B})\circ \alpha)\star ((\alpha\otimes \epsilon)\circ \blacktriangle)
                =((\id_{B}\otimes \id_{B}\otimes \Delta_{B})\circ \alpha)\star ((\Delta_{B}\otimes \id_{B}\otimes \id_{B})\circ \alpha).            
            \end{equation}
               
\end{itemize}
A coherent Hopf 2-algebra is a Hopf 2-algebra \cite{XH2023}, if $H$ and $B$ are coassociative ($H$ and $B$ are Hopf algebras), and $\alpha=(\epsilon\otimes \epsilon\otimes \epsilon)\circ(\blacktriangle\otimes \id_{H})\circ\blacktriangle$.
\end{definition}

\begin{remark}\label{remark for coherent Hopf 2-algebra}
In general, for every Hopf algebroid over an algebra $B$, the base algebra $B$ is not necessarily commutative. However, in order to give a correct definition of coherent Hopf 2-algebras, we need the maps $\epsilon, s, t$ to be  Hopf algebra maps, since only in this case condition (v) and (vi) make sense. As a result, we assume that the Hopf algebroid $H$ is a central Hopf algebroid, and the base algebra $B$ is a commutative algebra. In the following, we denote the image of $\alpha$ by $\alpha(h)=:\yi{h}\otimes \er{h}\otimes \san{h}$ for any $h\in H$, and use different sumless Sweedler notations for the coproducts of Hopf coquasigroup and Hopf algebroid structure on $H$, namely,  $\blacktriangle(h)=:\one{h}\otimes\two{h}$, $\Delta(h)=:\uone{h}\otimes_{B} \utwo{h}$.

\begin{itemize}
   \item [(1)] By \cite{XH2023}, condition (iii) is well defined and can be  translated into 
    \begin{align}\label{coco}
\uone{\one{h}}\ot_{B}\utwo{\one{h}}\ot \uone{\two{h}}\ot_{B}\utwo{\two{h}}=\one{\uone{h}}\ot_{B}\one{\utwo{h}}\ot\two{\uone{h}}\ot_{B}\two{\utwo{h}},
    \end{align}{}
    for any $h\in H$.
    \item[(2)] By using condition (iv) and the fact that $s, t$ are bialgebra maps, we can see (\ref{equ. co-natural transformation}) is well defined since $(s\otimes s\otimes s)\circ \alpha\circ t=(\blacktriangle\otimes \id_{H})\circ \blacktriangle\circ s$ and $(t\otimes t\otimes t)\circ \alpha\circ s=(\id_{H}\otimes \blacktriangle)\circ \blacktriangle\circ t$. 
    \item[(3)] The left hand side of (\ref{equ. coassociator 3}) is well defined since
    \begin{align*}
        ((\epsilon\otimes \alpha)\circ \blacktriangle)(t(b))=&\epsilon(t(\one{b}))\otimes \alpha(t(\two{b}))=\one{b}\otimes \one{\one{\two{b}}}\otimes \two{\one{\two{b}}}\otimes \two{\two{b}}\\
        =&(\id_{B}\otimes\Delta_{B}\otimes \id_{B})\circ \alpha(s(b)).
    \end{align*}
    and
    \begin{align*}
        (\id_{B}\otimes\Delta_{B}\otimes \id_{B})\circ \alpha(t(b))=&\one{\one{b}}\otimes \one{\two{\one{b}}}\otimes \two{\two{\one{b}}}\otimes \two{b}=\alpha(s(\one{b}))\otimes \epsilon(s(\two{b}))\\
        =&((\alpha\otimes \epsilon)\circ\blacktriangle)(s(b)).
    \end{align*}
        The right hand side of (\ref{equ. coassociator 3}) is also well defined, since
        \begin{align*}
            ((\id_{B}\otimes \id_{B}\otimes \Delta_{B})\circ \alpha)(t(b))=\one{\one{b}}\otimes \two{\one{b}}\otimes \one{\two{b}}\otimes \two{\two{b}}=((\Delta_{B}\otimes \id_{B}\otimes \id_{B})\circ \alpha)(s(b)).
        \end{align*}
\end{itemize}

\end{remark}

Now let's explain why Definition \ref{def. coherent Hopf-2-algebra} is a quantisation of a coherent 2-group, whose objects form a quasigroup. First, the morphisms and their composition form a groupoid, which corresponds to a Hopf algebroid. Second, the tensor products of objects and morphisms form two quasigroups, which corresponds to two Hopf coquasigroups. 

By the definition of monoidal category, we can see that axiom (ii) is natural, since the source and target maps from objects to morphisms preserve the tensor product, and the identity map from objects to morphisms also preserves the tensor product. The interchange law corresponds to condition (iii). The source and target of the morphism $\alpha_{g,h,k}$ is $(gh)k$ and $g(hk)$, which corresponds to condition (iv).  The naturality of $\alpha$ corresponds to (v). The pentagon diagram corresponds to condition (vi). Here, we will continue to refer to Definition \ref{def. coherent Hopf-2-algebra} as a coherent Hopf 2-algebra, even though it corresponds only to a special case of a ‘quantum’ coherent 2-group whose sets of objects and morphisms form quasigroups.

\begin{proposition}\label{Prof. Hopf 2-algebra}
Given a coherent Hopf 2-algebra as in Definition \ref{def. coherent Hopf-2-algebra}, the antipodes satisfy the following property:

\item[(i)]$\Delta\circ S_{H}=(S_{H}\ot_{B} S_{H})\circ \Delta$.
\item[(ii)] $S$ is a coalgebra map on $(H, \blacktriangle, \epsilon_{H})$. In other words, $\blacktriangle\circ S=(S\ot S)\circ \blacktriangle$ and $\epsilon_{H}\circ S=\epsilon_{H}$.
\item[(iii)] If $H$ is commutative, $S\circ S_{H}=S_{H}\circ S$.

\end{proposition}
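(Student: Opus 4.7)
The plan is to use axiom (iv), which forces the two coproducts $\blacktriangle$ and $\Delta$ to cocommute, as the bridge between the two coalgebra structures on $H$, and to combine it with the respective antipode identities of the Hopf coquasigroup and the central Hopf algebroid. Throughout, axioms (ii) and (iii) supply the compatibility of $\epsilon$, $s$, $t$ with the Hopf coquasigroup antipode, namely $S_{H}\circ s=s\circ S_{B}$, $S_{H}\circ t=t\circ S_{B}$, and $\epsilon_{H}=\epsilon_{B}\circ\epsilon$.

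For (i), I would first verify well-definedness of $S_{H}\otimes_{B}S_{H}$ as a map $H\otimes_{B}H\to H\otimes_{B}H$: the two identities above, together with the centrality of the images of $s,t$ in $H$, imply that the balanced relation $ht(b)\otimes_{B}h'=h\otimes_{B}s(b)h'$ is preserved after applying $S_{H}$ in each tensorand. For the identity itself, I apply the algebra morphism $\Delta$ to the Hopf coquasigroup antipode relation $\one{h}S_{H}(\two{h})=\epsilon_{H}(h)1_{H}$, then use axiom (iv) to rearrange the resulting product into a convolution expression in the Takeuchi algebra $H\times_{B}H$, and conclude by uniqueness of the convolution inverse there that $\Delta(S_{H}(h))=S_{H}(\uone{h})\otimes_{B}S_{H}(\utwo{h})$.

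For (ii), the counit identity $\epsilon_{H}\circ S=\epsilon_{H}$ reduces, via $\epsilon_{H}=\epsilon_{B}\circ\epsilon$, to showing $\epsilon\circ S=\epsilon$; this follows by applying $\epsilon$ to the Hopf algebroid antipode axiom $\mu_{R}\circ(id_{H}\otimes_{B}S)\circ\Delta=s\circ\epsilon$ and invoking the left-character property together with $\epsilon\circ s=id_{B}$. The coproduct identity $\blacktriangle\circ S=(S\otimes S)\circ\blacktriangle$ is the dual companion of (i): axiom (iv) again lets one convert a $\Delta$-convolution into a $\blacktriangle$-tensor, and now the Hopf algebroid antipode axiom $\mu_{L}\circ(S\otimes_{B}id_{H})\circ\Delta=t\circ\epsilon$ plays the role that the Hopf coquasigroup antipode identity played in (i); uniqueness of the convolution inverse in $H\otimes H$ with respect to $\blacktriangle$ then yields the desired formula.

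For (iii), commutativity of $H$ turns both $S$ and $S_{H}$ into algebra maps. The plan is to show that $S\circ S_{H}$ and $S_{H}\circ S$ are both convolution inverses of $S$ with respect to $\blacktriangle$, hence coincide by uniqueness. The first is immediate by applying $S$ to $\one{h}S_{H}(\two{h})=\epsilon_{H}(h)1_{H}$ and using commutativity to obtain $S(\one{h})\,S(S_{H}(\two{h}))=\epsilon_{H}(h)1_{H}$. The second follows by evaluating the Hopf coquasigroup antipode identity at $S(h)$ and applying part (ii) to rewrite $\blacktriangle(S(h))=S(\one{h})\otimes S(\two{h})$, together with the already-established $\epsilon_{H}\circ S=\epsilon_{H}$; this gives $S(\one{h})\,S_{H}(S(\two{h}))=\epsilon_{H}(h)1_{H}$. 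The corresponding right-convolution identities are obtained in the same way using the other Hopf coquasigroup antipode relation $S_{H}(\one{h})\two{h}=\epsilon_{H}(h)1_{H}$.

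The principal obstacle is bookkeeping: two coproducts, two tensor products ($\otimes$ versus $\otimes_{B}$), and two antipodes must be juggled simultaneously, with every convolution step checked to live in the correct algebra — the Takeuchi product $H\times_{B}H$ for arguments involving $\Delta$ and the ordinary tensor algebra $H\otimes H$ for arguments involving $\blacktriangle$. Centrality of the images of $s,t$ is used repeatedly but implicitly to commute source and target factors past arbitrary elements of $H$, and axiom (iv) must be invoked at precisely the right places so that a $\Delta$-level product can be converted into a $\blacktriangle$-level one or vice versa.
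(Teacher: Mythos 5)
Your overall strategy --- using axiom (iv) as the bridge between the two coproducts and the respective antipode axioms as the engines --- is the same as the paper's, and your well-definedness check for $S_{H}\ot_{B}S_{H}$ matches the remark following the proposition. The gap is in how you close each argument: you appeal to ``uniqueness of the convolution inverse'', but in parts (i) and (iii) the convolution in question is taken with respect to $\blacktriangle$, which is \emph{not} coassociative, so the convolution algebra is not associative and two-sided inverses need not be unique. This is exactly the failure mode that the strengthened Hopf coquasigroup axioms exist to circumvent: the derived identity $\one{h}S_{H}(\two{h})=\epsilon_{H}(h)1_{H}$ that you invoke is too weak, and the paper instead uses the defining axiom $\one{\one{h}}\ot\two{\one{h}}S_{H}(\two{h})=h\ot 1_{H}$ to run an explicit sandwich computation: for (i) one writes $S_{H}(\uone{h})\ot_{B}S_{H}(\utwo{h})=\bigl(S_{H}(\uone{\one{\one{h}}})\ot_{B}S_{H}(\utwo{\one{\one{h}}})\bigr)\,\Delta\bigl(\two{\one{h}}S_{H}(\two{h})\bigr)$, expands using that $\Delta$ is an algebra map, applies axiom (iv), and collapses the left factor with $S_{H}(\one{x})\two{x}=\epsilon_{H}(x)$. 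Without that replacement your part (i) does not go through, and the same objection applies verbatim to your part (iii), where you again declare two maps equal because both are convolution inverses of $S$ with respect to $\blacktriangle$; the paper instead runs the three-fold sandwich $S(S_{H}(\uone{h}))\,S_{H}(\utwo{h})\,S_{H}(S(\uthree{h}))$ over the coassociative $\Delta$.

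For part (ii) the convolution is over $\Delta$, so associativity is available, but there is a second subtlety you gloss over: the Hopf algebroid antipode is not a two-sided inverse of $\mathrm{id}_{H}$ relative to a single unit --- the two axioms give $\uone{h}S(\utwo{h})=s(\epsilon(h))$ and $S(\uone{h})\utwo{h}=t(\epsilon(h))$, with \emph{different} local units $s\circ\epsilon$ and $t\circ\epsilon$. One must therefore evaluate the three-fold product $(S\ot S)(\blacktriangle(\uone{h}))\cdot\blacktriangle(\utwo{h})\cdot\blacktriangle(S(\uthree{h}))$ in two ways and check separately that each local unit is absorbed, using that $s,t$ are coalgebra maps so that, e.g., $\blacktriangle(t(\epsilon(\uone{h})))$ recombines with $\blacktriangle(S(\utwo{h}))$ into $\blacktriangle(S(h))$; this is what the paper does. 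Your counit computation $\epsilon_{H}\circ S=\epsilon_{H}$ is fine, and the skeleton of (iii) becomes correct once it is recast as a $\Delta$-sandwich rather than a $\blacktriangle$-uniqueness claim.
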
{}
\begin{proof}

For (i),  $\forall h\in H$ we can see $S_{H}(\uone{h})\otimes_{B}S_{H}(\utwo{h})$ is well defined. Since the image of source and target maps belongs to the center of $H$, we have
\begin{align*}
    S_{H}(t(b)\uone{h})\otimes_{B}S_{H}(\utwo{h})=&S_{H}(t(b)) S_{H}(\uone{h})\otimes_{B}S_{H}(\utwo{h})
    =t(S_{B}(b)) S_{H}(\uone{h})\otimes_{B}S_{H}(\utwo{h})\\
    =& S_{H}(\uone{h})\otimes_{B} s(S_{B}(b))S_{H}(\utwo{h})=S_{H}(\uone{h})\otimes_{B}S_{H}(s(b)\utwo{h}).
\end{align*}
We can see
\begin{align*}
    S_{H}(\uone{h})&\otimes_{B}S_{H}(\utwo{h})\\
    =&(S_{H}(\uone{\one{\one{h}}})\otimes_{B}S_{H}(\utwo{\one{\one{h}}}))(\Delta(\two{\one{h}}S_{H}(\two{h})))\\
    =&(S_{H}(\uone{\one{\one{h}}})\otimes_{B}S_{H}(\utwo{\one{\one{h}}}))(\uone{\two{\one{h}}}\otimes_{B}\utwo{\two{\one{h}}})(\uone{(S_{H}(\two{h}))}\otimes_{B}\utwo{(S_{H}(\two{h}))})\\
    =&(S_{H}(\one{\uone{\one{h}}})\otimes_{B}S_{H}(\one{\utwo{\one{h}}}))(\two{\uone{\one{h}}}\otimes_{B}\two{\utwo{\one{h}}})(\uone{(S_{H}(\two{h}))}\otimes_{B}\utwo{(S_{H}(\two{h}))})\\
    =&(\epsilon_{H}(\uone{\one{h}})\otimes_{B}\epsilon_{H}(\utwo{\one{h}}))(\uone{(S_{H}(\two{h}))}\otimes_{B}\utwo{(S_{H}(\two{h}))})\\
    =&\epsilon_{B}(\epsilon(\uone{\one{h}})\epsilon(\utwo{\one{h}}))(\uone{(S_{H}(\two{h}))}\otimes_{B}\utwo{(S_{H}(\two{h}))})\\
    =&\epsilon_{B}(\epsilon(s(\epsilon(\uone{\one{h}}))\utwo{\one{h}}))(\uone{(S_{H}(\two{h}))}\otimes_{B}\utwo{(S_{H}(\two{h}))})\\
    =&\epsilon_{H}(\one{h})(\uone{(S_{H}(\two{h}))}\otimes_{B}\utwo{(S_{H}(\two{h}))})\\
    =&\uone{(S_{H}(h))}\otimes_{B}\utwo{(S_{H}(h))}.
\end{align*}
For (ii), we can first observe that for any $h\in H$, $(S(\one{\uone{h}})\otimes S(\two{\uone{h}}))(\one{\utwo{h}}\otimes\two{\utwo{h}})$ is well defined. Indeed, 
\begin{align*}
    (S(\one{(t(b)\uone{h})})&\otimes S(\two{(t(b)\uone{h})}))(\one{\utwo{h}}\otimes\two{\utwo{h}})\\
    =&(S(t(b\one{})\one{\uone{h}})\otimes S(t(b\two{})\two{\uone{h}}))(\one{\utwo{h}}\otimes\two{\utwo{h}})\\
    =&(S(\one{\uone{h}})\otimes S(\two{\uone{h}}))(s(b\one{})\ot s(b\two{}))(\one{\utwo{h}}\otimes\two{\utwo{h}})\\
    =&(S(\one{\uone{h}})\otimes S(\two{\uone{h}}))(s(b)\one{}\ot s(b)\two{})(\one{\utwo{h}}\otimes\two{\utwo{h}})\\
    =& (S(\one{\uone{h}})\otimes S(\two{\uone{h}}))(\one{(s(b)\utwo{h})}\otimes\two{(s(b)\utwo{h})}).
\end{align*}
Similarly, since $H$ is a central Hopf algebroid, $(\one{\uone{h}}\otimes\two{\uone{h}})(\one{S(\utwo{h})}\otimes \two{S(\utwo{h})})$ is well defined.
So on the one hand
\begin{align*}
    (S(\one{\uone{h}})&\otimes S(\two{\uone{h}}))(\one{\utwo{h}}\otimes\two{\utwo{h}})(\one{S(\uthree{h})}\otimes \two{S(\uthree{h})})\\
    =&(S(\one{\uone{h}})\otimes S(\two{\uone{h}}))(\one{\utwo{h}\uone{}}\otimes\two{\utwo{h}\uone{}})(\one{S(\utwo{h}\utwo{})}\otimes \two{S(\utwo{h}\utwo{})})\\
    =&(S(\one{\uone{h}})\otimes S(\two{\uone{h}}))(\blacktriangle(\utwo{h}\uone{}S(\utwo{h}\utwo{})))\\
    =&(S(\one{\uone{h}})\otimes S(\two{\uone{h}}))(\one{s(\epsilon(\utwo{h}))}\otimes \two{s(\epsilon(\utwo{h}))})\\
    =&(S(\one{\uone{h}})\otimes S(\two{\uone{h}}))(s(\epsilon(\one{\utwo{h}}))\otimes s(\epsilon(\two{\utwo{h}})))\\
    =&S(\one{\uone{h}}t(\epsilon(\one{\utwo{h}})))\otimes S(\two{\uone{h}}t(\epsilon(\two{\utwo{h}})))\\
    =&S(\uone{\one{h}}t(\epsilon(\utwo{\one{h}})))\otimes S(\uone{\two{h}}t(\epsilon(\utwo{\two{h}})))\\
    =&S(\one{h})\otimes S(\two{h}),
\end{align*}
on the other hand
\begin{align*}
     (S(\one{\uone{h}})&\otimes S(\two{\uone{h}}))(\one{\utwo{h}}\otimes\two{\utwo{h}})(\one{S(\uthree{h})}\otimes \two{S(\uthree{h})})\\
     =&(S(\one{\uone{h}\uone{}})\otimes S(\two{\uone{h}\uone{}}))(\one{\uone{h}\utwo{}}\otimes\two{\uone{h}\utwo{}})(\one{S(\utwo{h})}\otimes \two{S(\utwo{h})})\\
     =&(S(\one{\uone{h}}\uone{})\otimes S(\two{\uone{h}}\uone{}))(\one{\uone{h}}\utwo{}\otimes\two{\uone{h}}\utwo{})(\one{S(\utwo{h})}\otimes \two{S(\utwo{h})})\\
     =&t(\epsilon(\one{\uone{h}}))\otimes t(\epsilon(\two{\uone{h}}))(\one{S(\utwo{h})}\otimes \two{S(\utwo{h})})\\
     =&\one{(t(\epsilon(\uone{h}))S(\utwo{h}))}\otimes \two{(t(\epsilon(\uone{h}))S(\utwo{h}))}\\
     =&\blacktriangle(S(s(\epsilon(\uone{h}))\utwo{h}))\\
     =&\blacktriangle(S(h)),
\end{align*}
so $\blacktriangle(S(h))=S(h\one{})\ot S(h\two{})$. We also have \begin{align*}
    \epsilon_{H}(S(h))=&\epsilon_{H}(S(s\circ \epsilon(\utwo{h})\uone{h}))=\epsilon_{H}(S(\uone{h})t\circ\epsilon(\utwo{h}))=\epsilon_{H}(S(\uone{h}))\epsilon_{H}\circ t\circ\epsilon(\utwo{h})\\
    =&\epsilon_{H}(S(\uone{h}))\epsilon_{B}\circ\epsilon(\utwo{h})=\epsilon_{H}(S(\uone{h}))\epsilon_{H}(\utwo{h})=\epsilon_{H}(S(\uone{h})\utwo{h})=\epsilon_{H}(t(\epsilon(h)))\\
    =&\epsilon_{B}\circ \epsilon(h)=\epsilon_{H}(h).
\end{align*}
For (iii), we can first observe that 
 $S(S_{H}(\uone{h}))S_{H}(\utwo{h})$ is well defined. Indeed,
\begin{align*}
    S(S_{H}(t(b)\uone{h}))S_{H}(\utwo{h})=&S(S_{H}(\uone{h})t\circ S_{B}(b))S_{H}(\utwo{h})=S(S_{H}(\uone{h}))s\circ S_{B}(b)S_{H}(\utwo{h})\\
    =&S(S_{H}(\uone{h}))S_{H}(s(b)\utwo{h}).
\end{align*}
Similarly, $S_{H}(\uone{h})S_{H}(S(\utwo{h}))$ is also well defined. Indeed,
\begin{align*}
    S_{H}(t(b)\uone{h})S_{H}(S(\utwo{h}))=&S_{H}(\uone{h})S_{H}(t(b)))S_{H}(S(\utwo{h}))=S_{H}(\uone{h})S_{H}(t(b)S(\utwo{h}))\\
    =&S_{H}(\uone{h})S_{H}(S(s(b)\utwo{h})).
\end{align*}
So on the one hand
\begin{align*}
    &S(S_{H}(\uone{h}))S_{H}(\utwo{h})S_{H}(S(\uthree{h}))
    =S(S_{H}(\uone{h}))S_{H}(\utwo{h}S(\uthree{h}))\\
    =&S(S_{H}(\uone{h}))S_{H}(s(\epsilon(\utwo{h})))
    =S(S_{H}(\uone{h}))s(S_{B}(\epsilon(\utwo{h})))\\
    =&S\big(S_{H}(\uone{h})t(S_{B}(\epsilon(\utwo{h})))\big)
    =S\big(S_{H}(\uone{h})S_{H}(t(\epsilon(\utwo{h})))\big)\\
    =&S(S_{H}(\uone{h}t(\epsilon(\utwo{h}))))
    =S(S_{H}(h))
\end{align*}
where in the first step we use the fact that $H$ is commutative. On the other hand
\begin{align*}
     &S(S_{H}(\uone{h}))S_{H}(\utwo{h})S_{H}(S(\uthree{h}))
    =S(\uone{S_{H}(\uone{h})})\utwo{S_{H}(\uone{h})}S_{H}(S(\utwo{h}))\\
    =&t(\epsilon(S_{H}(\uone{h})))S_{H}(S(\utwo{h}))=S_{H}(t(\epsilon(\uone{h})))S_{H}(S(\utwo{h}))\\
    =&S_{H}(S(s(\epsilon(\uone{h}))\utwo{h}))=S_{H}(S(h)),
\end{align*}
where the first step uses (i) of this Proposition.

\end{proof}{}

\section{Crossed comodule of Hopf coquasigroups}\label{sec. Crossed comodule of Hopf coquasigroups}

In this section, we will study crossed comodules of Hopf coquasigroups. Moreover, we will use crossed comodules of Hopf coquasigroups to construct coherent Hopf 2-algebras with non-trivial coassociators.

\begin{definition}\label{cross comodule of Hopf coquasigroups}
A \textit{crossed comodule of Hopf coquasigroup} $(A, B, \phi,\delta)$ is a coassociative pair $(A, B, \phi)$,  such that
\begin{itemize}
    \item[(1)] $A$ is a left $B$ comodule coalgebra and left $B$ comodule algebra with coaction $\delta$;
    
    \item[(2)]For any $b\in B$,
    \begin{align}\label{cc3}
       \eins{\phi(b)}\ot \nul{\phi(b)}=\one{\one{b}}S_{B}(\two{b})\ot \phi(\two{\one{b}})=\one{b}S_{B}(\two{\two{b}})\otimes \phi(\one{\two{b}}); 
        \end{align}{}
        \item[(3)]For any $a\in A$,
     \begin{align}\label{cc4}
\phi(\eins{a})\ot \nul{a}=\one{a}S_{A}(\three{a})\ot \two{a}.
        \end{align}{}  
\end{itemize}{}

\end{definition}{}
If $B$ is coassociative
 the crossed comodule of Hopf coquasigroup is a \textit{crossed comodule of Hopf algebra} \cite{Yael11}. In the following, we will present a useful Lemma. We will still write down the proof as it is slightly different with Proposition 5.14 of \cite{Majid09}.

\begin{lemma}\label{lemma. construct Hopf coquasigroup}
Let $(A, B, \phi, \delta)$ be a crossed comodule of Hopf coquasigroup, if $B$ is commutative, then the tensor product $H:=A\ot B$  is a Hopf coquasigroup, with factorwise multiplication, and unit $1_{A}\ot 1_{B}$. The coproduct is given by $\blacktriangle(a\ot b):=\one{a}\ot \eins{\two{a}}\one{b}\ot \nul{\two{a}}\ot \two{b}$, the counit is given by $\epsilon_{H}(a\ot b):=\epsilon_{A}(a)\epsilon_{B}(b)$ and the antipode is given by $S_{H}(a\ot b):= S_{A}(\nul{a})\ot S_{B}(\eins{a}b)$,  $\forall a\ot b\in A\ot B$. Moreover, if $B$ is coassociative, then $H$ is a Hopf algebra.
\end{lemma}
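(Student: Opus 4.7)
The plan is to check in order that $H = A \otimes B$ with the given structure maps is (i) an algebra, (ii) equipped with a counit and coproduct that are unital algebra homomorphisms, (iii) satisfying the four Hopf coquasigroup antipode axioms, and finally (iv) in the coassociative case, coassociative. Since the product and unit on $H$ are just the tensor product algebra structure, step (i) is automatic. For step (ii), the counit axiom $(\id_H\otimes \epsilon_H)\circ \blacktriangle = \id_H$ reduces, after evaluating $\epsilon_H$ on the second tensor factor of $\blacktriangle(a\otimes b)$, to the comodule-coalgebra identity $\eins{\two{a}}\epsilon_A(\nul{\two{a}}) = \epsilon_A(\two{a})$ from (cc2), which together with the counit axioms of $A$ and $B$ returns $a\otimes b$; the other side is symmetric. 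That $\blacktriangle$ is an algebra map uses commutativity of $B$ in order to slide $\one{b}$ past $\eins{\two{a'}}$ when multiplying $\blacktriangle(a\otimes b)\,\blacktriangle(a'\otimes b')$, together with the hypothesis that $\delta$ is an algebra map so that $\delta(\two{a}\two{a'}) = \delta(\two{a})\delta(\two{a'})$.

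The substantive step is verifying the four coquasigroup antipode identities. For the first one, I would iterate the coproduct to get
\[
(\id_H\otimes \blacktriangle)\blacktriangle(a\otimes b) = \one{a}\otimes \eins{\two{a}}\one{b}\otimes \one{\nul{\two{a}}}\otimes \eins{\two{\nul{\two{a}}}}\one{\two{b}}\otimes \nul{\two{\nul{\two{a}}}}\otimes \two{\two{b}},
\]
then apply $S_H$ to the first pair and multiply with the second pair. The key rewriting is to apply the comodule-coalgebra identity (cc1) to $\two{a}$, converting $\eins{\two{a}}\otimes \one{\nul{\two{a}}}\otimes \two{\nul{\two{a}}}$ into $\eins{\one{\two{a}}}\eins{\two{\two{a}}}\otimes \nul{\one{\two{a}}}\otimes \nul{\two{\two{a}}}$. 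After this substitution the $A$-factor takes the form $\one{a} S_A(\nul{\one{\two{a}}}) \nul{\two{\two{a}}}\otimes\cdots$, which collapses via the first coquasigroup antipode axiom of $A$ to $1_A$ times a counit, while the $B$-factor collapses via the antipode identity on $B$ and (cc2), producing $1_B\otimes (\nul{\two{\nul{\two{a}}}}\otimes \two{\two{b}}) = 1_H\otimes(a\otimes b)$. The remaining three axioms are verified by the same pattern, each time using (cc1), (cc2), and the corresponding coquasigroup antipode axiom of $A$ and $B$ applied in the appropriate slot.

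For the final assertion, assume $B$ is coassociative, so a Hopf algebra. Coassociativity of $\blacktriangle$ is then a direct computation: expanding $(\blacktriangle\otimes \id_H)\blacktriangle(a\otimes b)$ and $(\id_H\otimes \blacktriangle)\blacktriangle(a\otimes b)$, the two sides are identified by applying coassociativity of $\Delta_B$ to the $b$-part and (cc1) to the $a$-part (which is exactly the compatibility one needs to exchange an inner coaction with an outer coproduct). Together with the antipode identities already verified, this upgrades $H$ to a Hopf algebra.

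The main obstacle will be the bookkeeping in step (iii): without coassociativity on $A$ or $B$ one cannot freely reparenthesize, so each of the four coquasigroup antipode axioms must be checked independently, matching the exact slot structure produced by the nested comodule and comultiplication factors with the exact form of the antipode identity one invokes. The calculations are mechanical, but the order and placement of the uses of (cc1) versus (cc2) are dictated by which of the four axioms one is proving.
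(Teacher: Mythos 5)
Your proposal is correct and follows essentially the same route as the paper: direct verification that the counit axioms reduce to (cc2), that $\blacktriangle$ is multiplicative via commutativity of $B$, that the four coquasigroup antipode identities follow from (cc1), the comodule axiom, commutativity of $B$ and the antipode axioms of $A$ and $B$, and that coassociativity holds when $B$ is a Hopf algebra. The only ingredient the paper makes explicit that you leave implicit is the auxiliary colinearity identity $\eins{S_{A}(a)}\otimes \nul{S_{A}(a)}=\eins{a}\otimes S_{A}(\nul{a})$, which it derives from the comodule-coalgebra axioms and uses in its chosen antipode computation; your sketch cites exactly the facts from which this follows, so the gap is only one of bookkeeping, not of substance.
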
{}

\begin{proof}
$A\ot B$ is clearly an unital algebra and it is clearly counital. We first show $H$ is also a bialgebra:
\begin{align*}
    \blacktriangle(aa'\ot bb')=&\one{a}\one{a'}\ot \eins{\two{a}}\eins{\two{a'}}\one{b}\one{b'}\ot \nul{\two{a}}\nul{\two{a'}}\ot \two{b}\two{b'}\\
    =&\one{a}\one{a'}\ot \eins{\two{a}}\one{b}\eins{\two{a'}}\one{b'}\ot \nul{\two{a}}\nul{\two{a'}}\ot \two{b}\two{b'}\\
    =&\blacktriangle(a\ot b) \blacktriangle(a'\ot b'),
\end{align*}
here we use the fact that $B$ is a commutative algebra in the 2nd step.  Thus $H$ is a bialgebra. Now check the antipode $S_{H}$ on $h=a\otimes b$,
\begin{align*}
    \one{\one{h}}\otimes& S_{H}(\two{\one{h}})\two{h}\\
    =&\one{\one{a}}\otimes \eins{\two{\one{a}}}\one{\eins{\two{a}}}\one{\one{b}}\otimes S_{A}(\nul{\nul{\two{\one{a}}}})\nul{\two{a}}\otimes S_{B}(\eins{\nul{\two{\one{a}}}}\two{\eins{\two{a}}}\two{\one{b}})\two{b}\\
    =&\one{\one{a}}\otimes \one{\eins{\two{\one{a}}}}\one{\eins{\two{a}}}\one{\one{b}}\otimes S_{A}(\nul{\two{\one{a}}})\nul{\two{a}}\otimes S_{B}(\two{\eins{\two{\one{a}}}}\two{\eins{\two{a}}}\two{\one{b}})\two{b}\\
    =&\one{\one{a}}\otimes \one{\eins{S_{A}(\two{\one{a}})}}\one{\eins{\two{a}}}\one{\one{b}}\otimes \nul{S_{A}(\two{\one{a}})}\nul{\two{a}}\otimes S_{B}(\two{\eins{S_{A}(\two{\one{a}})}}\two{\eins{\two{a}}}\two{\one{b}})\two{b}\\
    =&a\ot \one{\one{b}}\ot1\ot S_{B}(\two{\one{b}})\two{b}\\
    =&a\otimes b\otimes 1_{A}\otimes 1_{B}
\end{align*}
where the third step uses the fact that $\eins{a}\otimes S_{A}(\nul{a})=\eins{S_{A}(a)}\otimes \nul{S_{A}(a)}$. Indeed,
\begin{align*}
    \eins{a}\otimes& S_{A}(\nul{a})\\
    =&\eins{\one{\one{a}}}\eins{\two{\one{a}}}\eins{S_{A}(\two{a})}\otimes S_{A}(\nul{\one{\one{a}}})\nul{\two{\one{a}}}\nul{S_{A}(\two{a})}\\
    =&\eins{\one{a}}\eins{S_{A}(\two{a})}\otimes S_{A}(\one{\nul{\one{a}}})\two{\nul{\one{a}}}\nul{S_{A}(\two{a})}\\
    =&\eins{\one{a}}\eins{S_{A}(\two{a})}\otimes \epsilon_{A}(\nul{\one{a}})\nul{S_{A}(\two{a})}\\
    =&\eins{S_{A}(a)}\otimes \nul{S_{A}(a)},
\end{align*}
where in the second and third steps we use the comodule coalgebra property.
The rest axioms of Hopf coquasigroups are similar. Thus $H$ is a Hopf coquasigroup.

When $B$ is coassociative, for any $a\ot b\in A\ot B$, we also have
\begin{align*}
((\id_{H}\ot \blacktriangle)\circ \blacktriangle)(a\ot b)=&(\id_{H}\ot \blacktriangle)(\one{a}\ot \eins{\two{a}}\one{b}\ot \nul{\two{a}}\ot \two{b})\\
=&\one{a}\ot \eins{\two{a}}\one{b}\ot \one{\nul{\two{a}}}\ot\eins{\two{\nul{\two{a}}}}\two{b}\ot \nul{\two{\nul{\two{a}}}}\ot \three{b}\\
=&\one{a}\ot \eins{\two{a}}\eins{\three{a}}\one{b}\ot \nul{\two{a}}\ot \eins{\nul{\three{a}}}\two{b}\ot \nul{\nul{\three{a}}}\ot \three{b}\\
=&\one{a}\ot \eins{\two{a}}\one{\eins{\three{a}}}\one{b}\ot \nul{\two{a}}\ot \two{\eins{\three{a}}}\two{b}\ot \nul{\three{a}}\ot \three{b}\\
=&(\blacktriangle\ot \id_{H})(\one{a}\ot \eins{\two{a}}\one{b}\ot \nul{\two{a}}\ot \two{b})\\
=&((\blacktriangle\ot \id_{H})\circ \blacktriangle)(a\ot b),
\end{align*}{}
where in the 3rd step we use the fact that $A$ is a comodule coalgebra and in the 4th step we use the fact that $A$ is a left $B$ comodule. So $(H, \blacktriangle, \epsilon_{H})$ is coassociative.
\end{proof}

From the proof above we can also see that even if $A$ is a Hopf coquasigroup, we can also get a Hopf coquasigroup $A\otimes B$, with the same coproduct, counit, and antipode as above.

\begin{lemma}\label{lemma. construct Hopf algebroid}
Let $(A, B, \phi, \delta)$ be a crossed comodule of Hopf coquasigroup. If $B$ is commutative and the image of $\phi$ belongs to the center of $A$, then $H=A\otimes B$ is a central Hopf algebroid over $B$ with the source, target and counit (of the bialgebroid structure) being bialgebra maps. Moreover, 
\begin{align*}
    (\Delta\otimes \Delta)\circ \blacktriangle=(\id_{H}\otimes \textup{flip}\otimes \id_{H})\circ (\blacktriangle\otimes_{B}\blacktriangle)\circ\Delta.
    \end{align*}
\end{lemma}
\begin{proof}
 The source and target maps $s, t: B\to H$ are given by $s(b):=\phi(\one{b})\ot \two{b}$, and $t(b):=1_{A}\ot b$, for any $b\in B$. The counit map $\epsilon: H\to B$ is defined to be $\epsilon(a\ot b):=\epsilon_{A}(a)b$, and the central Hopf algebroid coproduct is defined to be $\Delta(a\ot b):=(\one{a}\ot 1_{B})\ot_{B}(\two{a}\ot b)$. The antipode of the Hopf algebroid is given by $S(a\otimes b):=S_{A}(a)\phi(\one{b})\ot \two{b}$. Now we show all the structures above form a central Hopf algebroid structure on $H$.
First, we can see that $s, t$ are algebra maps. Now, we show $H$ is a $B$-coring, where the $B$-bimodule structure on $H$ is given by $b'\triangleright (a\ot b) \triangleleft b''=s(b')t(b'')(a\ot b)$ for $a\ot b\in H$, $b', b''\in B$.
So we have 
\begin{align*}
    \epsilon(b'\triangleright (a\ot b) \triangleleft b'')=&\epsilon(s(b')t(b'')(a\ot b))
    =\epsilon_{A}(\phi(\one{b'})a)\two{b'}b''b\\
    =&\epsilon_{B}(\one{b'})\epsilon_{A}(a)\two{b'}b''b
    =b'\epsilon(a\ot b)b'',
\end{align*}
where we use the fact that $\phi$ is a bialgebra map in the 3rd step. Clearly, $\epsilon$ is an algebra map from $A\ot B$ to $B$. We also have
\begin{align*}
(\epsilon\ot \epsilon)(\blacktriangle(a\ot b))=& (\epsilon\ot \epsilon)(\one{a}\ot\eins{\two{a}}\one{b}\ot\nul{\two{a}}\ot\two{b})\\
=&\epsilon_{A}(\one{a})\eins{\two{a}}\one{b}\ot \epsilon_{A}(\nul{\two{a}})\two{b}\\
    =&\eins{a}\one{b}\ot \epsilon_{A}(\nul{a})\two{b}\\
    =&\epsilon_{A}(a)\one{b}\ot \two{b}\\
    =&\Delta_{B}(\epsilon(a\ot b)),
\end{align*}
where for the 3rd step we use the fact that $A$ is a comodule algebra. So we can see that $\epsilon$ is a bialgebra map from $A\ot B$ to $B$.
We can also show $s$ and $t$ are also coalgebra maps
\begin{align*}
    \blacktriangle(s(b))=&\blacktriangle(\phi(\one{b})\ot \two{b})\\
    =&\one{\phi(\one{b})}\ot \eins{\two{\phi(\one{b})}}\one{\two{b}}\ot \nul{\two{\phi(\one{b})}}\ot\two{\two{b}}\\
    =&\phi(\one{\one{b}})\ot \eins{\phi(\two{\one{b}})}\one{\two{b}}\ot \nul{\phi(\two{\one{b}})}\ot\two{\two{b}}\\
    =&\phi(\one{\one{b}})\ot \eins{\phi(\one{\two{\one{b}}})}\two{\two{\one{b}}}\ot \nul{\phi(\one{\two{\one{b}}})}\ot\two{b}\\
    =&\phi(\one{\one{b}})\otimes \one{\one{\one{\two{\one{b}}}}}S_{B}(\two{\one{\two{\one{b}}}})\two{\two{\one{b}}}\otimes \phi(\two{\one{\one{\two{\one{b}}}}})\otimes \two{b}\\
    =&\phi(\one{\one{b}})\otimes \one{\two{\one{b}}}\otimes \phi(\two{\two{\one{b}}})\otimes \two{b}\\
    =&\phi(\one{\one{b}})\otimes \two{\one{b}}\otimes \phi(\one{\two{b}})\otimes \two{\two{b}}\\
    =&(s\otimes s)(\Delta_{B}(b)),
\end{align*}
where in the 4th and 7th steps we use Lemma \ref{lemma. coquasigroup}, and in the 5th step we use (\ref{cc3}). We also have
\begin{align*}
    \blacktriangle(t(b))=&\blacktriangle(1\ot b)
    =1\ot \one{b}\ot 1\ot \two{b}
    =(t\ot t)(\Delta_{B}(b)),
\end{align*}
for any $b\in B$. We can also show $\Delta$ is a $B$-bimodule map:
\begin{align*}
   \Delta(b'\triangleright(a\ot b))=&\Delta(\phi(\one{b'})a\ot \two{b'}b)\\
   =&(\one{\phi(\one{b'})}\one{a}\ot 1)\ot_{B}(\two{\phi(\one{b'})}\two{a}\ot \two{b'}b)\\
   =&(\phi(\one{\one{b'}})\one{a}\ot 1)\ot_{B}(\phi(\two{\one{b'}})\two{a}\ot \two{b'}b)\\
   =&(\phi(\one{b'})\one{a}\ot 1)\ot_{B}(\phi(\one{\two{b'}})\two{a}\ot \two{\two{b'}}b)\\
   =&(\phi(\one{b'})\one{a}\ot 1)\ot_{B} s(\two{b'})(\two{a}\ot b)\\
   =&t(\two{b'})(\phi(\one{b'})\one{a}\ot 1)\ot_{B}(\two{a}\ot b)\\
   =&(\phi(\one{b'})\one{a}\ot \two{b'})\ot_{B}(\two{a}\ot b)\\
   =&b' \triangleright \Delta(a\ot b),
\end{align*}
where in the fourth step we use Lemma \ref{lemma. coquasigroup}. We also have
\begin{align*}
    \Delta((a\ot b)\triangleleft b')=&\Delta(a\ot bb')
    =(\one{a}\ot 1)\ot_{B} (\two{a}\ot bb')
    =\Delta(a\ot b)\triangleleft b'
\end{align*}
for any $a\ot b\in A\ot B$ and $b'\in B$.
$\Delta$ is clearly coassociative and counital
by straightforward computation. Up to now we have already shown that $H$ is a $B$-coring. Clearly, $\Delta$ is also an algebra map from $H$ to $H\otimes_{B} H$. Since the image of $\phi$ belongs to the center of $A$, we can check that 
 $S(t(b')(a\ot b) s(b''))=t(b'')S(a\ot b)s(b')$ for any $a\ot b\in H$ and $b', b''\in B$. Indeed,
\begin{align*}
    S(t(b')(a\ot b) s(b''))=&S(a\phi(\one{b''})\ot b'b\two{b''})\\
    =&S_{A}(a\phi(\one{b''}))\phi(\one{b'}\one{b}\one{\two{b''}})\ot \two{b'}\two{b}\two{\two{b''}}\\
    =&S_{A}(a)\phi(\one{b})\phi(\one{b'})\ot b'' \two{b}\two{b'}\\
    =&t(b'')S(a\ot b)s(b'),
\end{align*}
where the 3rd step uses the fact that $B$ is commutative and its image of $\phi$ belongs to the center of $A$. Now, we can see that 
\begin{align*}
    S(\one{a}\ot 1)(\two{a}\ot b)=S_{A}(\one{a})\two{a}\ot b
    =(t\circ \epsilon)(a\ot b),
\end{align*}
and 
\begin{align*}
    (\one{a}\ot 1)S(\two{a}\ot b)=\one{a}S_{A}(\two{a})\phi(\one{b})\ot \two{b}
    =(s\circ \epsilon)(a\ot b)
\end{align*}
So $H$ is a central Hopf algebroid.

Let $h=a\otimes b\in H$, on the one hand
\begin{align*}
    &(\Delta\otimes \Delta)\circ \blacktriangle(h)
    =\one{a}\otimes 1\otimes_{B}\two{a}\otimes \eins{\three{a}}\one{b}\otimes \one{\nul{\three{a}}}\otimes 1\otimes_{B} \two{\nul{\three{a}}}\otimes \two{b},
\end{align*}
on the other hand
\begin{align*}
    (H\otimes &\textup{flip}\otimes H)\circ (\blacktriangle\otimes_{B}\blacktriangle)\circ(\Delta(h))\\
    =&\one{a}\otimes \eins{\two{a}}\otimes_{B}\three{a}\otimes \eins{\four{a}}\one{b}\otimes \nul{\two{a}}\otimes1\otimes_{B}\nul{\four{a}}\otimes \two{b}\\
    =&\one{a}\otimes 1\otimes_{B}\phi(\one{\eins{\two{a}}})\three{a}\otimes \two{\eins{\two{a}}}\eins{\four{a}}\one{b}\otimes \nul{\two{a}}\otimes 1\otimes_{B}\nul{\four{a}}\otimes \two{b}\\
    =&\one{a}\otimes 1\otimes_{B}\phi(\eins{\two{a}})\three{a}\otimes \eins{\nul{\two{a}}}\eins{\four{a}}\one{b}\otimes \nul{\nul{\two{a}}}\otimes 1\otimes_{B}\nul{\four{a}}\otimes \two{b}\\
    =&\one{a}\otimes 1\otimes_{B}\one{\two{a}}S_{A}(\three{\two{a}})\three{a}\otimes \eins{\two{\two{a}}}\eins{\four{a}}\one{b}\otimes \nul{\two{\two{a}}}\otimes 1\otimes_{B}\nul{\four{a}}\otimes \two{b}\\
    =&\one{a}\otimes 1\otimes_{B}\two{a}\otimes \eins{\three{a}}\eins{\four{a}}\one{b}\otimes \nul{\three{a}}\otimes 1\otimes_{B}\nul{\four{a}}\otimes \two{b}\\
    =&\one{a}\otimes 1\otimes_{B}\two{a}\otimes \eins{\three{a}}\one{b}\otimes \one{\nul{\three{a}}}\otimes 1\otimes_{B} \two{\nul{\three{a}}}\otimes \two{b},
\end{align*}
where  the second step  uses the balanced tensor product over $B$, the fourth step uses (\ref{cc4}) and  the last step uses the fact that $A$ is a comodule coalgebra of $B$.
\end{proof}

\subsection{Quasi coassociative Hopf coquasigroup}\label{sec. Quasi coassociative Hopf coquasigroup}

 In this section we will construct a coherent Hopf 2-algebra in terms of a crossed comodule of Hopf coquasigroup. First we define quasi coassociative Hopf coquasigroups, which can be viewed as a quantization of quasiassociative quasigroups.

\begin{definition}\label{def. quasi coassociative}

A coassociative pair $(C, B, \phi)$ is called \textit{quasi coassociative} if:
\begin{itemize}
   \item [(1)] $\phi: B\to C$ is a surjective morphism of Hopf coquasigroups.
    \item [(2)] For any $i\in I_{B}:=\ker(\phi)$,
    \begin{align}\label{equ. quasi associative}
        \begin{cases}
          \one{\one{i}}S_{B}(\two{i})\otimes \two{\one{i}}&\in B\otimes I_{B},\\
          \one{i}S_{B}(\two{\two{i}})\otimes \one{\two{i}}&\in  B\otimes I_{B}.
        \end{cases}
    \end{align}
    \item [(3)] $I_{B}\subseteq \ker(\beta)$, where $\beta: B\to B\otimes B\otimes B$ is the comultiplicative  coassociator (\ref{coassociator}).
\end{itemize}
\end{definition}
 Since $\phi$ is surjective, any element in $C$ can be given by $[c]:=\phi(c)$ for some element $c\in B$. If $B$ is a quasi coassociative, by (\ref{equ. quasi associative}) there is a linear map $\Ad: C\to B\otimes C$,
$\Ad([c]):=\one{c}S_{B}(\three{c})\otimes [\two{c}] :=\one{\one{c}}S_{B}(\two{c})\otimes[\one{\two{c}}] = \one{c}S_{B}(\two{\two{c}})\otimes[\two{\one{c}}]$ for any $[c]\in C$ (the last equality hold because of Lemma \ref{lemma. coquasigroup}). For any $b\in B$, there is an important result of Lemma  \ref{lemma. coquasigroup}:
\begin{align}\label{equ. quasigroup relation 1}
    \one{\one{b}}S_{B}(\three{\one{b}})\two{b}\otimes [\two{\one{b}}]=\one{\one{\one{b}}}S_{B}(\two{\one{b}})\two{b}\otimes [\two{\one{\one{b}}}]=\one{b}\otimes [\two{b}].
\end{align}
Similarly,
\begin{align}\label{equ. quasigroup relation 2}
    S_{B}(\one{b})\one{\two{b}}S_{B}(\three{\two{b}})\otimes [\two{\two{b}}]=S_{B}(\two{b})\otimes [\one{b}].
\end{align}
Since $I_{B}\subseteq \ker(\beta)$, the comultiplicative coassociator $\beta$ factors through $C$, namely, there is a linear map $\tilde{\beta}: C\to B\otimes B\otimes B$ given by $\tilde{\beta}([c]):=\beta(c)=c^{\hat{1}}\otimes c^{\hat{2}}\otimes c^{\hat{3}}$.

\begin{lemma}\label{lemma. construct crossed module}
Let  $(C, B, \phi)$ be  quasi coassociative. If $B$ is commutative, then $(C, B, \phi,\Ad)$ is a crossed comodule of Hopf coquasigroup. 

\end{lemma}
\begin{proof}

Since 
\[ \one{\one{c}}S_{B}(\two{\two{c}})\otimes \two{\one{c}}S_{B}(\one{\two{c}})=c\o\o  S_{B}(c\t)\ot c\o\t\o S_{B}(c\o\t\t),\]
for any $c\in B$, by Lemma \ref{lemma. coquasigroup}, we can prove $\Ad$ is a coaction. Indeed,
\begin{align*}\label{equ. Ad as comodule map}
   \one{\one{c}}S_{B}(\two{\three{c}})\otimes \two{\one{c}}S_{B}(\one{\three{c}})\otimes [\two{c}]=c\o\o S_{B}(c\t)\ot c\o\t\o S_{B}(c\o\t\th)\ot [c\o\t\t].
\end{align*}
We can see that $\Ad$ is an algebra map, since $B$ is commutative. Now let's show that $\Ad$ is a comodule coalgebra map.
On the one hand 
\begin{align*}
    \eins{[c]}\ot \one{\nul{[c]}}\ot \two{\nul{[c]}}
    =\one{c}S_{B}(\three{c})\otimes \one{[\two{c}]}\otimes \two{[\two{c}]}.
\end{align*}
On the other hand
\begin{align*}
    \eins{\one{[c]}}&\eins{\two{[c]}}\ot \nul{\one{[c]}}\ot \nul{\two{[c]}}\\
    =&\one{\one{c}}S_{B}(\three{\one{c}})\one{\two{c}}S_{B}(\three{\two{c}})\otimes [\two{\one{c}}]\otimes [\two{\two{c}}]\\
    =&\one{c}S_{B}(\three{c})\otimes \one{[\two{c}]}\otimes \two{[\two{c}]},
\end{align*}
where the last step uses Lemma \ref{lemma. coquasigroup}. And
\begin{align*}
    \epsilon_{C}([c])=\epsilon_{B}(c)=\one{c}S_{B}(\three{c})\epsilon_{C}[\two{c}]=\eins{[c]}\epsilon_{C}(\nul{[c]}).
\end{align*}
We can see (\ref{cc3}) and (\ref{cc4}) are given by the definition of $\Ad$.

\end{proof}

Now we want to construct a coherent 2-group in terms of the crossed comodule $(C, B, \phi, \Ad)$ we just considered above. In the following, we always assume $B$ is commutative. Compare to Definition \ref{def. coherent Hopf-2-algebra}, the first Hopf coquasigroup is $B$. The second Hopf coquasigroup is $H:=C\otimes B$. By Lemma \ref{lemma. construct Hopf coquasigroup}, as a Hopf coquasigroup the coproduct, counit and antipode are given by the following:
\begin{align}
   \blacktriangle([c]\otimes b)&:=\one{[c]}\ot \eins{\two{[c]}}\one{b}\ot \nul{\two{[c]}}\ot \two{b}, \\
    \epsilon_{H}([c]\otimes b)&:=\epsilon_{B}(c)\epsilon_{B}(b),\\
    S_{H}([c]\otimes b)&:=[S_{B}(\two{\one{c}})]\otimes S_{B}(\one{c})\two{\two{c}}S_{B}(b)=S_{C}(\nul{[c]})\otimes S_{B}(\eins{[c]}b).
\end{align}
By Lemma \ref{lemma. construct Hopf algebroid} with the source and target maps $s, t: B\to H$ are given by
\begin{align}\label{equ. source and target map}
    s(b):=[\one{b}]\ot\two{b},\quad t(b):=1_{C}\otimes b,
\end{align}
for any $b\in B$. The Hopf algebroid coproduct is given by
\begin{align}\label{equ. coproduct of bialgebroid}
  \Delta([c]\otimes b):=([\one{c}]\otimes 1_{B})\otimes_{B}([\two{c}]\otimes b),  
\end{align} and the counit is given by 
\begin{align}\label{equ. counit of bialgebroid}
    \epsilon([c]\otimes b):=\epsilon_{B}(c)b.
\end{align}
The Hopf algebroid antipode is
\begin{align}\label{equ. antipode of bialgebroid}
    S([c]\otimes b):=[S_{B}(c)\one{b}]\otimes \two{b}.
\end{align}
We define the coassociator $\alpha: H\to B\otimes B\otimes B$ by
\begin{align}\label{def. coassociator}
    \alpha([c]\otimes b):=\beta(c)(\one{\one{b}}\otimes\two{\one{b}}\otimes \two{b})=c^{\hat{1}}\one{\one{b}}\otimes c^{\hat{2}}\two{\one{b}}\otimes c^{\hat{3}}\two{b}.
\end{align}
 By using (\ref{equ. relation of coassociator}), we can check condition (iv) of Definition \ref{def. coherent Hopf-2-algebra}:
\begin{align*}
    \alpha(t(b))=\one{\one{b}}\otimes\two{\one{b}}\otimes \two{b}
\end{align*}
and
\begin{align*}
    \alpha(s(b))=&(\one{b})^{\hat{1}}\one{\one{\two{b}}}\otimes (\one{b})^{\hat{2}}\two{\one{\two{b}}}\otimes (\one{b})^{\hat{3}}\two{\two{b}}\\
    =&\one{b}\otimes \one{\two{b}}\otimes \two{\two{b}},
\end{align*}
where the second step uses (\ref{equ. relation of coassociator}). Now let's check (v) and (vi) of Definition \ref{def. coherent Hopf-2-algebra}.

\begin{lemma}\label{lemma. natural of coassociator}
The coassociator $\alpha:H\to B\ot B\ot B$ satisfies:
\[ ((s\otimes s\otimes s)\circ \alpha)\star ((\blacktriangle\otimes \id_{H})\circ \blacktriangle)=((\id_{H}\otimes \blacktriangle)\circ \blacktriangle)\star ((t\otimes t\otimes t)\circ \alpha).\]
 More precisely,
for any $h\in H$, we have
\begin{align*}
        &s(\uone{h}\yi{})\one{\one{\utwo{h}}}\ot s(\uone{h}\er{})\two{\one{\utwo{h}}}\ot s(\uone{h}\san{})\two{\utwo{h}}\\
        =&\one{\uone{h}}t(\yi{\utwo{h}})\ot \one{\two{\uone{h}}}t(\er{\utwo{h}})\ot \two{\two{\uone{h}}}t(\san{\utwo{h}}).
    \end{align*}
\end{lemma}
\begin{proof}
Let $h=[c]\otimes b$. The left hand side of the equation above is:
\begin{align*}
    &s((\one{c})^{\hat{1}})([\one{\one{\two{c}}}]\otimes \eins{[\two{\one{\two{c}}}]}\one{\eins{[\two{\two{c}}]}}\one{\one{b}})
    \otimes s((\one{c})^{\hat{2}})(\nul{[\two{\one{\two{c}}}]}\otimes \two{\eins{[\two{\two{c}}]}}\two{\one{b}})\\
    &\otimes s((\one{c})^{\hat{3}}) (\nul{[\two{\two{c}}]}\otimes \two{b}),
\end{align*}
while the right hand side of the equation is
\begin{align*}
    &[\one{\one{c}}]\otimes \eins{[\two{\one{c}}]}(\two{c})^{\hat{1}}\one{\one{b}}
    \otimes\one{\nul{[\two{\one{c}}]}}\otimes \eins{\two{\nul{[\two{\one{c}}]}}}(\two{c})^{\hat{2}}\two{\one{b}}\\
    &\otimes\nul{\two{\nul{[\two{\one{c}}]}}}\otimes (\two{c})^{\hat{3}}\two{b}.
\end{align*}
So it is sufficient to show
\begin{align*}
    s((\one{c})^{\hat{1}})&([\one{\one{\two{c}}}]\otimes \eins{[\two{\one{\two{c}}}]}\one{\eins{[\two{\two{c}}]}})
    \otimes s((\one{c})^{\hat{2}})(\nul{[\two{\one{\two{c}}}]}\otimes \two{\eins{[\two{\two{c}}]}})\\
    &\otimes s((\one{c})^{\hat{3}}) (\nul{[\two{\two{c}}]}\otimes 1)\\
    =    &[\one{\one{c}}]\otimes \eins{[\two{\one{c}}]}(\two{c})^{\hat{1}}
    \otimes\one{\nul{[\two{\one{c}}]}}\otimes \eins{\two{\nul{[\two{\one{c}}]}}}(\two{c})^{\hat{2}}
    \otimes\nul{\two{\nul{[\two{\one{c}}]}}}\otimes (\two{c})^{\hat{3}}.
\end{align*}
By the definition of Hopf coquasigroup, this is equivalent to
\begin{align*}
    s((\one{\one{\one{c}}})^{\hat{1}})&([\one{\one{\two{\one{\one{c}}}}}]\otimes \eins{[\two{\one{\two{\one{\one{c}}}}}]}\one{\eins{[\two{\two{\one{\one{c}}}}]}}
    \one{\one{\two{\one{c}}}}\one{\one{S_{B}(\two{c})}})\\
    &\otimes (s((\one{\one{\one{c}}})^{\hat{2}})\nul{[\two{\one{\two{\one{\one{c}}}}}]}\otimes \two{\eins{[\two{\two{\one{\one{c}}}}]}}\two{\one{\two{\one{c}}}}\two{\one{S_{B}(\two{c})}})\\
    &\otimes (s((\one{\one{\one{c}}})^{\hat{3}}) \nul{[\two{\two{\one{\one{c}}}}]}\otimes\two{\two{\one{c}}}\two{S_{B}(\two{c})}) \\
    =&[\one{\one{\one{\one{c}}}}]\otimes \eins{[\two{\one{\one{\one{c}}}}]}(\two{\one{\one{c}}})^{\hat{1}}\one{\one{\two{\one{c}}}}\one{\one{S_{B}(\two{c})}}\\
    &\otimes\one{\nul{[\two{\one{\one{\one{c}}}}]}}\otimes \eins{\two{\nul{[\two{\one{\one{\one{c}}}}]}}}(\two{\one{\one{c}}})^{\hat{2}}\two{\one{\two{\one{c}}}}\two{\one{S_{B}(\two{c})}}\\
    &\otimes\nul{\two{\nul{[\two{\one{\one{\one{c}}}}]}}}\otimes (\two{\one{\one{c}}})^{\hat{3}}\two{\two{\one{c}}}\two{S_{B}(\two{c})}.
\end{align*}

Thus it is sufficient to show
\begin{align*}
        s((\one{\one{c}})^{\hat{1}})&([\one{\one{\two{\one{c}}}}]\otimes \eins{[\two{\one{\two{\one{c}}}}]}\one{\eins{[\two{\two{\one{c}}}]}}\one{\one{\two{c}}})\\
    &\otimes s((\one{\one{c}})^{\hat{2}})(\nul{[\two{\one{\two{\one{c}}}}]}\otimes \two{\eins{[\two{\two{\one{c}}}]}}\two{\one{\two{c}}})
    \otimes s((\one{\one{c}})^{\hat{3}}) (\nul{[\two{\two{\one{c}}}]}\otimes\two{\two{c}}) \\
    =&[\one{\one{\one{c}}}]\otimes \eins{[\two{\one{\one{c}}}]}(\two{\one{c}})^{\hat{1}}\one{\one{\two{c}}}\\
    &\otimes\one{\nul{[\two{\one{\one{c}}}]}}\otimes \eins{\two{\nul{[\two{\one{\one{c}}}]}}}(\two{\one{c}})^{\hat{2}}\two{\one{\two{c}}}
    \otimes\nul{\two{\nul{[\two{\one{\one{c}}}]}}}\otimes (\two{\one{c}})^{\hat{3}}\two{\two{c}}.
\end{align*}

The left hand side is
\begin{align*}
    s((\one{\one{c}})^{\hat{1}})&([\one{\one{\two{\one{c}}}}]\otimes \eins{[\two{\one{\two{\one{c}}}}]}\one{\eins{[\two{\two{\one{c}}}]}}\one{\one{\two{c}}})\\
    &\otimes s((\one{\one{c}})^{\hat{2}})(\nul{[\two{\one{\two{\one{c}}}}]}\otimes \two{\eins{[\two{\two{\one{c}}}]}}\two{\one{\two{c}}})
    \otimes s((\one{\one{c}})^{\hat{3}})( \nul{[\two{\two{\one{c}}}]}\otimes\two{\two{c}}) \\
    =&s((\one{c})^{\hat{1}})([\one{\one{\two{c}}}]\otimes \eins{[\two{\one{\two{c}}}]}\one{\eins{[\one{\one{\two{\two{c}}}}]}}\one{\two{\one{\two{\two{c}}}}})\\
    &\otimes s((\one{c})^{\hat{2}})(\nul{[\two{\one{\two{c}}}]}\otimes \two{\eins{[\one{\one{\two{\two{c}}}}]}}\two{\two{\one{\two{\two{c}}}}})
    \otimes s((\one{c})^{\hat{3}}) (\nul{[\one{\one{\two{\two{c}}}}]}\otimes\two{\two{\two{c}}})\\
    =&s((\one{c})^{\hat{1}})([\one{\one{\one{\two{c}}}}]\otimes \eins{[\two{\one{\one{\two{c}}}}]}\one{\two{\one{\two{c}}}})\\
    &\otimes s((\one{c})^{\hat{2}})(\nul{[\two{\one{\one{\two{c}}}}]}\otimes \two{\two{\one{\two{c}}}})
    \otimes s((\one{c})^{\hat{3}}) ([\one{\two{\two{c}}}]\otimes\two{\two{\two{c}}})\\
    =&s((\one{c})^{\hat{1}})([\one{\one{\two{c}}}]\otimes \eins{[\one{\one{\two{\one{\two{c}}}}}]}\two{\one{\two{\one{\two{c}}}}})\\
    &\otimes s((\one{c})^{\hat{2}})(\nul{[\one{\one{\two{\one{\two{c}}}}}]}\otimes \two{\two{\one{\two{c}}}})
    \otimes s((\one{c})^{\hat{3}}) ([\one{\two{\two{c}}}]\otimes\two{\two{\two{c}}})\\
    =&s((\one{c})^{\hat{1}})([\one{\one{\two{c}}}]\otimes \one{\one{\two{\one{\two{c}}}}})
    \otimes s((\one{c})^{\hat{2}})([\two{\one{\two{\one{\two{c}}}}}]\otimes \two{\two{\one{\two{c}}}})\\
    &\otimes s((\one{c})^{\hat{3}}) ([\one{\two{\two{c}}}]\otimes\two{\two{\two{c}}})\\
    =&[\one{((\one{c})^{\hat{1}})}\one{\one{\one{\two{c}}}}]\otimes \two{((\one{c})^{\hat{1}})}\two{\one{\one{\two{c}}}}
    \otimes[\one{((\one{c})^{\hat{2}})}\one{\two{\one{\two{c}}}}]\otimes \two{((\one{c})^{\hat{2}})}\two{\two{\one{\two{c}}}}\\
    &\otimes[\one{((\one{c})^{\hat{3}})}\one{\two{\two{c}}}]\otimes \two{((\one{c})^{\hat{3}})}\two{\two{\two{c}}}\\
    =&[\one{\one{c}}]\otimes \two{\one{c}}\otimes [\one{\one{\two{c}}}]\otimes \two{\one{\two{c}}}\otimes [\one{\two{\two{c}}}]\otimes \two{\two{\two{c}}}
\end{align*}
where for the 1st, 2nd, 3rd, 5th steps we use Lemma \ref{lemma. coquasigroup} and the fact that $\beta$ factors through $C$, in the 2nd, 4th steps we use (\ref{equ. quasigroup relation 1}), and the last step uses (\ref{equ. relation of coassociator}). The right hand side is:

\begin{align*}
[\one{\one{\one{c}}}]&\otimes \eins{[\two{\one{\one{c}}}]}(\two{\one{c}})^{\hat{1}}\one{\one{\two{c}}}
\otimes\one{\nul{[\two{\one{\one{c}}}]}}\otimes \eins{\two{\nul{[\two{\one{\one{c}}}]}}}(\two{\one{c}})^{\hat{2}}\two{\one{\two{c}}}\\
&\otimes\nul{\two{\nul{[\two{\one{\one{c}}}]}}}\otimes (\two{\one{c}})^{\hat{3}}\two{\two{c}}\\
=&[\one{\one{c}}]\otimes \eins{[\two{\one{c}}]}(\one{\two{c}})^{\hat{1}}\one{\one{\two{\two{c}}}}
\otimes\one{\nul{[\two{\one{c}}]}}\otimes \eins{\two{\nul{[\two{\one{c}}]}}}(\one{\two{c}})^{\hat{2}}\two{\one{\two{\two{c}}}}\\
&\otimes\nul{\two{\nul{[\two{\one{c}}]}}}\otimes (\one{\two{c}})^{\hat{3}}\two{\two{\two{c}}}\\
=&[\one{\one{c}}]\otimes \eins{[\two{\one{c}}]}\one{\two{c}}
\otimes\one{\nul{[\two{\one{c}}]}}\otimes \eins{\two{\nul{[\two{\one{c}}]}}}\one{\two{\two{c}}}\\
&\otimes\nul{\two{\nul{[\two{\one{c}}]}}}\otimes \two{\two{\two{c}}}\\
=&[\one{c}]\otimes \eins{[\one{\one{\two{c}}}]}\two{\one{\two{c}}}
\otimes\one{\nul{[\one{\one{\two{c}}}]}}\otimes \eins{\two{\nul{[\one{\one{\two{c}}}]}}}\one{\two{\two{c}}}\\
&\otimes\nul{\two{\nul{[\one{\one{\two{c}}}]}}}\otimes \two{\two{\two{c}}}\\
=&[\one{c}]\otimes \one{\one{\two{c}}}
\otimes[\one{\two{\one{\two{c}}}}]\otimes \eins{[\two{\two{\one{\two{c}}}}]}\one{\two{\two{c}}}
\otimes\nul{[\two{\two{\one{\two{c}}}}]}\otimes \two{\two{\two{c}}}\\
=&[\one{c}]\otimes \one{\two{c}}
\otimes[\one{\two{\two{c}}}]\otimes \eins{[\one{\one{\two{\two{\two{c}}}}}]}\two{\one{\two{\two{\two{c}}}}}
\otimes\nul{[\one{\one{\two{\two{\two{c}}}}}]}\otimes\two{\two{\two{\two{c}}}}\\
=& [\one{c}]\otimes \one{\two{c}}\otimes [\one{\two{\two{c}}}]\otimes \one{\one{\two{\two{\two{c}}}}}\otimes[\two{\one{\two{\two{\two{c}}}}}]\otimes \two{\two{\two{\two{c}}}}\\
=&[\one{\one{c}}]\otimes \two{\one{c}}\otimes [\one{\one{\two{c}}}]\otimes \two{\one{\two{c}}}\otimes [\one{\two{\two{c}}}]\otimes \two{\two{\two{c}}},
\end{align*}
where in the 1st, 3rd, 5th and last steps we use Lemma \ref{lemma. coquasigroup} and the fact that $\beta$ factors through $C$, the 2nd step uses (\ref{equ. relation of coassociator}), and the 4th and 6th steps use (\ref{equ. quasigroup relation 1}).

\end{proof}

\begin{lemma}\label{lemma. 3-cocycle relation} The coassociator  $\alpha: H\to B\otimes B\otimes B$ satisfies the 3-cocycle condition:
\[  ((\epsilon\otimes \alpha)\circ \blacktriangle)\star ((\id_{B}\otimes \Delta_{B}\otimes \id_{B})\circ \alpha)\star ((\alpha\otimes \epsilon)\circ \blacktriangle)
                =((\id_{B}\otimes \id_{B}\otimes \Delta_{B})\circ \alpha)\star ((\Delta_{B}\otimes \id_{B}\otimes \id_{B})\circ \alpha). \]
                More precisely, for any $h\in H$.
\begin{align*}
        &\yi{\uone{h}}\one{\yi{\utwo{h}}}\ot\er{\uone{h}}\two{\yi{\utwo{h}}}\ot \one{\san{\uone{h}}}\er{\utwo{h}}\ot\two{\san{\uone{h}}}\san{\utwo{h}}\\
        =&\epsilon(\one{\uone{h}})\yi{\utwo{h}}\yi{\one{\uthree{h}}}\ot \yi{\two{\uone{h}}}\one{\er{\utwo{h}}}\er{\one{\uthree{h}}}\ot \er{\two{\uone{h}}}\two{\er{\utwo{h}}}\san{\one{\uthree{h}}}\ot \san{\two{\uone{h}}}\san{\utwo{h}}\epsilon(\two{\uthree{h}}).
    \end{align*}
   
\end{lemma}
\begin{proof}
Let $h=[c]\otimes b$, the left hand side is
\begin{align*}
    (\one{c})^{\hat{1}}\one{(\two{c})^{\hat{1}}}\one{\one{\one{b}}}\otimes(\one{c})^{\hat{2}}\two{ (\two{c})^{\hat{1}}}\two{\one{\one{b}}}\otimes \one{ (\one{c})^{\hat{3}}} (\two{c})^{\hat{2}}\two{\one{b}}\otimes\two{ (\one{c})^{\hat{3}}} (\two{c})^{\hat{3}}\two{b},
\end{align*}
the right hand side is
\begin{align*}
    &\one{\one{\one{c}}}S_{B}(\two{\one{c}}) (c\t\o)^{\hat{1}} (c\t\t)^{\hat{1}}\one{\one{\one{b}}}\otimes  (\two{\one{\one{c}}})^{\hat{1}}\one{ (c\t\o)^{\hat{2}}} (c\t\t)^{\hat{2}}\two{\one{\one{b}}}\\
    \otimes& (\two{\one{\one{c}}})^{\hat{2}}\two{ (c\t\o)^{\hat{2}}} (c\t\t)^{\hat{3}}\two{\one{b}}\otimes  (\two{\one{\one{c}}})^{\hat{3}} (c\t\o)^{\hat{3}}\two{b}.
\end{align*}

We first observe that 
\begin{align*}
        (\one{c})^{\hat{1}}&\one{(\two{c})^{\hat{1}}}\otimes(\one{c})^{\hat{2}}\two{ (\two{c})^{\hat{1}}}\otimes \one{ (\one{c})^{\hat{3}}} (\two{c})^{\hat{2}}\otimes\two{ (\one{c})^{\hat{3}}} (\two{c})^{\hat{3}}\\
        =&(\one{\one{\one{c}}})^{\hat{1}}\one{(\two{\one{\one{c}}})^{\hat{1}}}\one{\one{\one{\two{\one{c}}}}}\one{\one{\one{S_{B}(\two{c})}}}\\
        &\otimes(\one{\one{\one{c}}})^{\hat{2}}\two{(\two{\one{\one{c}}})^{\hat{1}}}\two{\one{\one{\two{\one{c}}}}}\two{\one{\one{S_{B}(\two{c})}}}\\
        &\otimes \one{(\one{\one{\one{c}}})^{\hat{3}}} (\two{\one{\one{c}}})^{\hat{2}}\two{\one{\two{\one{c}}}}\two{\one{S_{B}(\two{c})}}\\
        &\otimes\two{ (\one{\one{\one{c}}})^{\hat{3}}} (\two{\one{\one{c}}})^{\hat{3}}\two{\two{\one{c}}}\two{S_{B}(\two{c})}
\end{align*}
and
\begin{align*}
      \one{\one{\one{c}}}&S_{B}(\two{\one{c}}) (c\t\o)^{\hat{1}} (c\t\t)^{\hat{1}}\otimes  (\two{\one{\one{c}}})^{\hat{1}}\one{ (c\t\o)^{\hat{2}}} (c\t\t)^{\hat{2}}\\
    &\otimes (\two{\one{\one{c}}})^{\hat{2}}\two{ (c\t\o)^{\hat{2}}} (c\t\t)^{\hat{3}}\otimes  (\two{\one{\one{c}}})^{\hat{3}} (c\t\o)^{\hat{3}}\\
    =&\one{\one{\one{\one{\one{c}}}}}S_{B}(\two{\one{\one{\one{c}}}}) (c\o\o\t\o)^{\hat{1}} (c\o\o\t\t)^{\hat{1}}\one{\one{\one{\two{\one{c}}}}}\one{\one{\one{S_{B}(\two{c})}}}\\
    &\otimes  (\two{\one{\one{\one{\one{c}}}}})^{\hat{1}}\one{ (c\o\o\t\o)^{\hat{2}}} (c\o\o\t\t)^{\hat{2}}\two{\one{\one{\two{\one{c}}}}}\two{\one{\one{S_{B}(\two{c})}}}\\
    &\otimes (\two{\one{\one{\one{\one{c}}}}})^{\hat{2}}\two{ (c\o\o\t\o)^{\hat{2}}} (c\o\o\t\t)^{\hat{3}}\two{\one{\two{\one{c}}}}\two{\one{S_{B}(\two{c})}}\\
    &\otimes  (\two{\one{\one{\one{\one{c}}}}})^{\hat{3}} (c\o\o\t\o)^{\hat{3}}\two{\two{\one{c}}}\two{S_{B}(\two{c})}.
\end{align*}
Thus to show this lemma it is sufficient to show: 
\begin{align*}
    (\one{\one{c}})^{\hat{1}}&\one{(\two{\one{c}})^{\hat{1}}}\one{\one{\one{\two{c}}}}\otimes(\one{\one{c}})^{\hat{2}}\two{(\two{\one{c}})^{\hat{1}}}\two{\one{\one{\two{c}}}}\\
    &\otimes \one{(\one{\one{c}})^{\hat{3}}} (\two{\one{c}})^{\hat{2}}\two{\one{\two{c}}}\otimes\two{ (\one{\one{c}})^{\hat{3}}} (\two{\one{c}})^{\hat{3}}\two{\two{c}}\\
    =&\one{\one{\one{\one{c}}}}S_{B}(\two{\one{\one{c}}}) (c\o\t\o)^{\hat{1}} (c\o\t\t)^{\hat{1}}\one{\one{\one{\two{c}}}}\\
    &\otimes  (\two{\one{\one{\one{c}}}})^{\hat{1}}\one{ (c\o\t\o)^{\hat{2}}} (c\o\t\t)^{\hat{2}}\two{\one{\one{\two{c}}}}\\
    &\otimes (\two{\one{\one{\one{c}}}})^{\hat{2}}\two{ (c\o\t\o)^{\hat{2}}} (c\o\t\t)^{\hat{3}}\two{\one{\two{c}}}\\
    &\otimes  (\two{\one{\one{\one{c}}}})^{\hat{3}} (c\o\t\o)^{\hat{3}}\two{\two{c}}.    
\end{align*}
Since $\ker(\phi)\subseteq \ker(\beta)$, the left hand side of the above equation becomes
\begin{align*}
    (\one{c})^{\hat{1}}&\one{(\one{\two{c}})^{\hat{1}}}\one{\one{\one{\two{\two{c}}}}}\otimes(\one{c})^{\hat{2}}\two{(\one{\two{c}})^{\hat{1}}}\two{\one{\one{\two{\two{c}}}}}\\
    &\otimes \one{(\one{c})^{\hat{3}}} (\one{\two{c}})^{\hat{2}}\two{\one{\two{\two{c}}}}\otimes\two{ (\one{c})^{\hat{3}}} (\one{\two{c}})^{\hat{3}}\two{\two{\two{c}}}\\
    =&(\one{c})^{\hat{1}}\one{\one{\two{c}}}\otimes(\one{c})^{\hat{2}}\two{\one{\two{c}}}
    \otimes \one{(\one{c})^{\hat{3}}} \one{\two{\two{c}}}\otimes\two{ (\one{c})^{\hat{3}}} \two{\two{\two{c}}}\\
    =&\one{c}\otimes\one{\two{c}}\otimes \one{\two{\two{c}}}\otimes\two{\two{\two{c}}},
\end{align*}
where the first step uses (\ref{equ. relation of coassociator}).
By  Lemma \ref{lemma. coquasigroup}  and the fact that $\beta$ factors through $C$, the right hand side becomes
\begin{align*}
    &\one{\one{\one{c}}}S_{B}(\two{\one{c}}) (\one{\two{c}})^{\hat{1}} (\one{\two{\two{c}}})^{\hat{1}}\one{\one{\one{\two{\two{\two{c}}}}}}
    \otimes  (\two{\one{\one{c}}})^{\hat{1}}\one{ (\one{\two{c}})^{\hat{2}}} (\one{\two{\two{c}}})^{\hat{2}}\two{\one{\one{\two{\two{\two{c}}}}}}\\
    &\otimes (\two{\one{\one{c}}})^{\hat{2}}\two{ (\one{\two{c}})^{\hat{2}}} (\one{\two{\two{c}}})^{\hat{3}}\two{\one{\two{\two{\two{c}}}}}
    \otimes  (\two{\one{\one{c}}})^{\hat{3}} (\one{\two{c}})^{\hat{3}}\two{\two{\two{\two{c}}}} \\
    =&\one{\one{\one{c}}}S_{B}(\two{\one{c}}) (\one{\two{c}})^{\hat{1}} (\one{\one{\two{\two{c}}}})^{\hat{1}}\one{\one{\two{\one{\two{\two{c}}}}}}
    \otimes  (\two{\one{\one{c}}})^{\hat{1}}\one{ (\one{\two{c}})^{\hat{2}}} (\one{\one{\two{\two{c}}}})^{\hat{2}}\two{\one{\two{\one{\two{\two{c}}}}}}\\
    &\otimes (\two{\one{\one{c}}})^{\hat{2}}\two{ (\one{\two{c}})^{\hat{2}}} (\one{\one{\two{\two{c}}}})^{\hat{3}}\two{\two{\one{\two{\two{c}}}}}
    \otimes  (\two{\one{\one{c}}})^{\hat{3}} (\one{\two{c}})^{\hat{3}}\two{\two{\two{c}}} \\
    =&\one{\one{\one{c}}}S_{B}(\two{\one{c}}) (\one{\two{c}})^{\hat{1}} \one{\one{\two{\two{c}}}}
    \otimes  (\two{\one{\one{c}}})^{\hat{1}}\one{ (\one{\two{c}})^{\hat{2}}} \one{\two{\one{\two{\two{c}}}}}\\
    &\otimes (\two{\one{\one{c}}})^{\hat{2}}\two{ (\one{\two{c}})^{\hat{2}}} \two{\two{\one{\two{\two{c}}}}}
    \otimes  (\two{\one{\one{c}}})^{\hat{3}} (\one{\two{c}})^{\hat{3}}\two{\two{\two{c}}} \\
    =&\one{\one{\one{c}}}S_{B}(\two{\one{c}})  \one{\two{c}}
    \otimes  (\two{\one{\one{c}}})^{\hat{1}} \one{\one{\two{\two{c}}}}
    \otimes (\two{\one{\one{c}}})^{\hat{2}}\two{\one{\two{\two{c}}}}
    \otimes  (\two{\one{\one{c}}})^{\hat{3}} \two{\two{\two{c}}} \\
    =&\one{\one{\one{\one{c}}}}S_{B}(\two{\one{\one{c}}})  \two{\one{c}}
    \otimes  (\two{\one{\one{\one{c}}}})^{\hat{1}} \one{\one{\two{c}}}
    \otimes (\two{\one{\one{\one{c}}}})^{\hat{2}}\two{\one{\two{c}}}
    \otimes  (\two{\one{\one{\one{c}}}})^{\hat{3}} \two{\two{c}} \\
    =&\one{\one{c}}
    \otimes  (\two{\one{c}})^{\hat{1}} \one{\one{\two{c}}}
    \otimes (\two{\one{c}})^{\hat{2}}\two{\one{\two{c}}}
    \otimes  (\two{\one{c}})^{\hat{3}} \two{\two{c}} \\
    =&\one{c}
    \otimes  (\one{\two{c}})^{\hat{1}} \one{\one{\two{\two{c}}}}
    \otimes (\one{\two{c}})^{\hat{2}}\two{\one{\two{\two{c}}}}
    \otimes  (\one{\two{c}})^{\hat{3}} \two{\two{\two{c}}} \\
    =&\one{c}\otimes\one{\two{c}}\otimes \one{\two{\two{c}}}\otimes\two{\two{\two{c}}},
\end{align*}
where in the 1st, 4th and 6th steps we use Lemma \ref{lemma. coquasigroup}  and the fact that $\beta$ factors through $C$, for
the 2nd, 3rd and last steps we use (\ref{equ. relation of coassociator}), and the 5th step uses (\ref{equ. quasigroup relation 1}) (since $\ker(\phi)\subseteq \ker(\beta)$).


\end{proof}
As a result of Lemmas  \ref{lemma. construct Hopf coquasigroup}, \ref{lemma. construct Hopf algebroid}, \ref{lemma. construct crossed module}, \ref{lemma. natural of coassociator} and \ref{lemma. 3-cocycle relation}
we have
\begin{theorem}\label{thm. main theorem}
If $(C, B, \phi)$ is quasi coassociative and $B$ is commutative, then $H=C\otimes B$ is a coherent Hopf 2-algebra with the structure maps given by:
\begin{align*}
    ([c]\ot b)([c']\ot b')=&[cc']\ot bb',\\
    \blacktriangle([c]\ot b)=&\one{[c]}\ot \eins{\two{[c]}}\one{b}\ot \nul{\two{[c]}}\ot \two{b},\\
    \epsilon_{H}([c]\ot b)=&\epsilon_{C}([c])\epsilon_{B}(b),\\
    S_{H}([c]\ot b)=& S_{C}(\nul{[c]})\ot S_{B}(\eins{[c]}b),\\
    s(b)=&\phi(\one{b})\otimes \two{b},\\
    t(b)=&1\otimes b,\\
    \Delta([c]\otimes b)=&\one{[c]}\otimes 1\otimes_{B}\two{[c]}\otimes b,\\
    \epsilon([c]\otimes b)=&\epsilon_{C}([c])b,\\
    S([c]\otimes b)=&S_{C}([c])\phi(\one{b})\otimes \two{b},\\
    \alpha([c]\otimes b)=&\beta(c)(\one{\one{b}}\otimes\two{\one{b}}\otimes \two{b})=c^{\hat{1}}\one{\one{b}}\otimes c^{\hat{2}}\two{\one{b}}\otimes c^{\hat{3}}\two{b}.
\end{align*}
\end{theorem}

\section{Finite dimensional coherent Hopf 2-algebras and examples}\label{sec. Finite dimensional coherent Hopf 2-algebra and examples}

Now we will give an explicit example of a coherent Hopf 2-algebra based on the Cayley algebra. By \cite{Majid09}, the unital basis of Cayley algebras $\mathcal{G}_{n}:=\{\pm e_{a}\,\,| \,\, a\in \mathbb{Z}_{2}^{n} \}$ is a quasigroup, with the product controlled by a 2-cochain $F: \mathbb{Z}_{2}^{n}\times \mathbb{Z}_{2}^{n}\to k^{*}$, more precisely, $e_{a}e_{b}:=F(a, b)e_{a+b}$. From now on, we define $e_{a}^{0}:=e_{a}$ and $e_{a}^{1}:=-e_{a}$, i.e. $\mathcal{G}_{n}=\{ e_{a}^{i}\,\,| \,\, a\in \mathbb{Z}_{2}^{n},\,\, i\in \mathbb{Z}_{2} \}$, so we have $e_{a}^{i}e_{b}^{j}=F(a, b)e_{a+b}^{i+j}$. We define $k\mathcal{G}_{n}$ as the linear extension of $\mathcal{G}_{n}$, which is a Hopf quasigroup with the coalgebra structure given by $\Delta(u)=u\otimes u$, $\epsilon(u)=1$, and $S(u):=u^{-1}$ on the basis elements.\\
As we already know from \cite{Majid09} that 
\begin{align*}
     k\mathcal{G}_{n}\simeq
  \begin{cases}
    \mathbb{C}       & \quad \text{if } n=1  \\
    \mathbb{H}       & \quad \text{if } n=2  \\
    \mathbb{O}       & \quad \text{if } n=3. 
  \end{cases}
\end{align*}
 We also have 
\begin{align*}
     N_{k\mathcal{G}_{n}}\simeq
  \begin{cases}
    \mathbb{C}       & \quad \text{if } n=1  \\
    \mathbb{H}       & \quad \text{if } n=2  \\
    \mathbb{R}       & \quad \text{if } n=3.  
  \end{cases}
\end{align*}
By [\cite{Majid09}, Prop 3.6], we know $\mathcal{G}_{n}$ is quasiassociative and $B:=k[\mathcal{G}_{n}]$ given by functions on $\mathcal{G}_{n}$ is a Hopf coquasigroup. Let $f_{a}^{i}\in k[\mathcal{G}_{n}]$ be the delta function on each element of $\mathcal{G}_{n}$, i.e. $f_{a}^{i}(e_{b}^{j})=\delta_{a, b}\delta_{i,j}$.  We can see $k[\mathcal{G}_{n}]$ is an algebra with generators $\{ f_{a}^{i}\,\,| \,\, a\in \mathbb{Z}_{2}^{n},\,\, i\in \mathbb{Z}_{2} \}$ subject to the relations: 
\begin{align*}
     f_{a}^{i}f_{a'}^{i'}=
  \begin{cases}
    f_{a}^{i}       & \quad \text{if } a=a' \quad\text{and}\quad i=i' \\
    0  & \quad \text{otherwise.} 
  \end{cases}
\end{align*}
The unit of $k[\mathcal{G}_{n}]$ is $\sum_{a\in \mathbb{Z}_{2}^{n},
i\in \mathbb{Z}_{2}} f_{a}^{i}$. The coproduct, counit and antipode are given by
\begin{align}
    \Delta_{B}(f_{a}^{i}):=&\sum_{\substack{b+c=a\\j+k=i}}F(b, c)f_{b}^{j}\otimes f_{c}^{k}.\\
    \epsilon_{B}(f_{a}^{i}):=&\delta_{a, 0}\delta_{i, 0}.\\
    S_{B}(f_{a}^{i}):=&F(a, a)f_{a}^{i}.
\end{align}
The previous structures make $k[\mathcal{G}_{n}]$ a Hopf coquasigroup. Now we will show $(k[\mathcal{G}_{0}], k[\mathcal{G}_{n}], \pi)$ is quasi coassociative, where $\pi:k[\mathcal{G}_{n}]\to k[\mathcal{G}_{0}]$ is the canonical projection map given by the pull back of the inclusion  $\{-e_{0}, e_{0}\}\subseteq \mathcal{G}_{n}$.
First, since $F(a, 0)=1$, we can see $(\pi\ot \id)\circ\Delta (f^{i}_{a})=\sum_{j+k=i}F(0,a)f^{j}_{0}\ot f^{k}_{a}=\sum_{j+k=i}f^{j}_{0}\ot f^{k}_{a}$, and $(k[\mathcal{G}_{0}], k[\mathcal{G}_{n}], \pi)$ is a coassociative pair. Second, we have 
\begin{align*}
    \one{\one{x}}S_{B}(\two{x})\otimes \two{\one{x}}&\in B\otimes I_{B}\\
    \one{x}S_{B}(\two{\two{x}})\otimes \one{\two{x}}&\in B\otimes I_{B},
\end{align*}
for any $x\in I_{B}$. Indeed, let $x\in I_{B}=\ker(\pi)$, then $x$ is a linear combination of $f_{a}^{i}$ with $a\neq 0$.
Without losing generality (since every map below is linear), assuming $x=f_{a}^{i}$, we can see

\begin{align}
    \one{x}S_{B}(\two{\two{x}})\ot \one{\two{x}}=\sum_{\substack{b+c+d=a\\j+k+l=i}}F(b, c+d)F(c, d)F(d, d) f_{b}^{j}f_{d}^{l}\ot f_{c}^{k}, 
\end{align}
the right hand side of the equality is not zero only if $b=d$. As a result $c$ is equal to $a$, and $\one{x}S_{B}(\two{\two{x}})\otimes \one{\two{x}}\in B\otimes I_{B}$. Similarly, we also have $ \one{\one{x}}S_{B}(\two{x})\otimes \two{\one{x}}\in B\otimes I_{B}$. Moreover, we can see the left adjoint coaction on $C$ is trivial, namely, $\Ad([f^{i}_{0}])=1\ot [f^{i}_{0}]$.
Finally, recall the comultiplicative coassociator (\ref{coassociator}) $\beta: B\to B\otimes B\otimes B$  
\begin{align*}
    \beta(b)=\one{\one{b}}\one{\one{S_{B}(\two{b})}}\otimes \one{\two{\one{b}}}\two{\one{S_{B}(\two{b})}}\otimes \two{\two{\one{b}}}\two{S_{B}(\two{b})}
\end{align*}
 for any $b\in B$. We can see $I_{B}\subseteq \ker(\beta)$. Indeed, without losing generality,
let $x=f_{a}^{i}$ with $a\neq 0$, we have:
\begin{align*}\label{equ. beta}
    \beta(x)=\beta(f_{a}^{i})=&\sum_{\substack{j+k+l+m+n+p=i\\b+c+d+e+f+g=a}}F(b+c+d, e+f+g)F(b, c+d)F(c, d)F(e, f+g)F(f, g)\\
    &F(e, e)F(f, f)F(g, g)f_{b}^{j}f_{g}^{p}\otimes f_{c}^{k}f_{f}^{n}\otimes f_{d}^{l}f_{e}^{m}.
\end{align*}
Since $a\neq 0$, we can see the right hand side of the above equation is zero (by using $b+c+d+e+f+g=a$). So $I_{B}$ belongs to the kernel of $\beta$. By Definition \ref{def. quasi coassociative},  we can see $(k[\mathcal{G}_{0}], k[\mathcal{G}_{n}], \pi)$ is quasi coassociative. Thus by Theorem \ref{thm. main theorem} there is a coherent Hopf 2-algebra structure, with $C=  k[\mathcal{G}_{0}]$, and $H= k[\mathcal{G}_{0}]\otimes k[\mathcal{G}_{n}]$.
More precisely, the Hopf algebroid structure of $H$ is given by:
\begin{align*}
    \Delta(f_{0}^{i}\otimes f_{a}^{l})&=\sum_{\substack{j+k=i}}f_{0}^{j}\otimes 1\otimes_{B} f_{0}^{k}\otimes f_{a}^{l};\\
    \epsilon(f_{0}^{i}\otimes f_{a}^{l})&=\delta_{i,0} f_{a}^{l};\\
    S(f_{0}^{i}\otimes f_{a}^{l})&=f_{0}^{i}\otimes f_{a}^{l-i};\\
    s(f_{a}^{l})&=\sum_{\substack{m+n=l}}f_{0}^{m}\otimes f_{a}^{n};\\
    t(f_{a}^{l})&=1\otimes f_{a}^{l}.
\end{align*}
Since the left adjoint coaction of $C$ is trivial, the Hopf coquasigroup structure of $H$ is given by:
\begin{align*}
    \blacktriangle(f_{0}^{i}\otimes f_{a}^{l})=&\sum_{\substack{m+n=l\\b+c=a\\j+k=i}}F(b, c)f_{0}^{j}\otimes f_{b}^{m}\otimes f_{0}^{k}\otimes f_{c}^{n};\\
    \epsilon_{H}(f_{0}^{i}\otimes f_{a}^{l})=&\epsilon_{B}(f_{0}^{i}f_{a}^{l})=\delta_{i, 0}\delta_{l, 0}\delta_{a, 0};\\
    S_{H}(f_{0}^{i}\otimes f_{a}^{l})=&F(a, a)f_{0}^{i}\otimes f_{a}^{l}.
\end{align*}
Recall $\alpha: H\to B\otimes B\otimes B$ in  (\ref{def. coassociator}), we have
\begin{align*}
  \alpha(f_{0}^{i}\otimes f_{a}^{l})=\sum_{\substack{k+m+n=l\\b+c+d=a}}\beta(f_{0}^{i})(F(b+c, d)F(b, c)f_{b}^{k}\otimes f_{c}^{m}\otimes f_{d}^{n}).  
\end{align*}
From the formula of $\beta$, we can see that $\alpha$ is controlled by a 3-coboundary $\partial F$. In fact,
\begin{align*}
    \beta(f^{i}_{0})=&\sum_{\substack{j, k, l\in \mathbb{Z}^{2}\\b, c, d\in \mathbb{Z}^{2}_{n}}}F(b+c+d, b+c+d)F(b, c+d)F(c, d)F(d, c+b)F(c, b)\\
    &F(d, d)F(c, c)F(b, b)f_{b}^{j}\otimes f_{c}^{k}\otimes f_{d}^{l}\\
    =& \sum_{\substack{j, k, l\in \mathbb{Z}^{2}\\b, c, d\in \mathbb{Z}^{2}_{n}}}F(b+c+d, b+c+d)\partial F(b, c, d)F(d, d)F(c, c)F(b, b)f_{b}^{j}\otimes f_{c}^{k}\otimes f_{d}^{l},
\end{align*}
where $\partial F$ is the 3-coboundary given by \cite{Majid09}:
\begin{align*}
    \partial F(b, c, d)=\frac{F(b, c+d)F(c, d)}{F(d, c+b)F(c, b)}=F(b, c+d)F(c, d)F(d, c+b)F(c, b).
\end{align*}

\vspace{.5cm}
	
\noindent
{\bf Acknowledgment:}
I thank Prof. Dr. Giovanni Landi, Prof. Dr. Ralf Meyer, Prof. Dr. Chenchang Zhu and  Daan Van De Weem for many useful discussions, and also Dr. Song Cheng, Adam Magee and Veronica Fantini for their proof reading.

\end{document}